\newtheorem{theorem}{Theorem}[section]
\newtheorem{lemma}[theorem]{Lemma}
\newtheorem{proposition}[theorem]{Proposition}
\newtheorem{definition}[theorem]{Definition}
\newtheorem{remark}[theorem]{Remark}
\newtheorem{remarks}[theorem]{Remarks}
\newcommand{\bC}{\mathbb{C}}
\newcommand{\bQ}{\mathbb{Q}}
\newcommand{\bR}{\mathbb{R}}
\newcommand{\bZ}{\mathbb{Z}}
\newcommand{\bN}{\mathbb{N}}
\newcommand{\bI}{\mathbb{I}}
\newcommand{\GL}{\mathrm{GL}}
\newcommand{\disp}{\displaystyle}
\newcommand{\Adj}{\mathrm{Adj}}
\def\disp{\displaystyle}
\numberwithin{equation}{section}
\begin{document}
\baselineskip=17pt
\title[An arithmetic study of the formal Laplace transform in several variables.]
{An arithmetic study of the formal Laplace transform in several
variables.}
\author{Said Manjra}
\curraddr{Department of Mathematics \\
Faculty of science\\
Al-Imam University \\
P.O.Box: 240337. Riyadh 11322. Saudi Arabia.}
\email{smanjra@uottawa.ca} \subjclass[2000]{12H25}
\thanks{}
\dedicatory{}

\maketitle

\begin{abstract}
Let $K$ be a number field, and let $K(x_1,...,x_d)$ be the field of
rational fractions in the variables $x_1,...,x_d$. In this paper, we
introduce two kinds of Laplace transform adapted to solutions of the
differential $K(x_1,...,x_d)$-modules with regular singularities,
and give some of their basic differential and arithmetic properties.
The purpose of this article is to provide some tools which might be
useful, in particular, for the arithmetic study of the differential
$K(x_1,...,x_d)$-modules associated to $E$-functions in several
variables.

\end{abstract}

\section{Introduction}

A power series $\sum_{n\ge 0}a_nx^n\in \bC[[x]]$ is said to be \emph{Gevrey of order s} if there exists a positive constant $C$ such $|a_n|< C^nn!^s$ for all $n>0$. It is known since the beginning of the last century that any formal power series  arising in the asymptotic expansion of any solution of a linear or non-linear analytic differential equation is Gevrey of rational order $s$ (\cite{G}, \cite{Ma} and \cite{P}).  Moreover, the Gevrey series occurring in Taylor or asymptotic expansions of classical special functions have  particular arithmetic properties which can be summarized in the concept of the \emph{Gevrey series of arithmetic type} (\cite{An}, \cite{A3}):

A power series $\sum_{n\ge 0}a_nx^n$ is said to be \emph{arithmetic Gevrey series} of order $s\in {\bQ}$ if its coefficients
$a_n$ are algebraic  numbers and if there exists a constant
$C>0$ such that the absolute value of the conjugates of the algebraic number $a_n/(n!)^s$ is less than $C^n$, and that, for all
$n\in {\bN}$, the common denominator  $d_n$ of the numbers $a_0=a_0/(0!)^s, \ldots, a_n/(n!)^s$  is less than  $C^n$. For instance, the Taylor expansion at the origin of the Airy function is arithmetic Gevrey series of precise order $-2/3$.
This  class includes the confluent and non confluent generalized hypergeometric series with rational parameters,
the \emph{Barnes} generalized hypergeometric series $_pF_{q-1}$ with rational parameters, any series which is algebraic
over $\overline{\bQ}(x)$, and two especially  well-known series: $G$\emph{-functions} ($s=0$) and  $E$\emph{-functions} ($s=-1$) which have proved
useful in number theory and have applications in transcendence proofs and differential equations (e.g. see \cite{Be}, \cite{B} and \cite{L}).

The theory  of $G$-functions (the case of arithmetic Gevrey series of order 0) in one-variable is now well known thanks to Bombieri, Chudnovsky, Dwork,  Andr\'e and others  who, furthermore,  brought to light its connections with arithmetic geometry (e.g. see \cite{A0}, \cite{Ch}, \cite{D} and \cite{DGS}).

 A theory  of  arithmetic Gevrey series of order $s\ne 0$ in one-variable was recently developed by Andr\'e  in which a \emph{Laplace transform} is used in an essential way \cite{An}. Based on a theorem of Chudnovsky \cite{DGS},  he gave interesting structure results for the $\bC[x]$-differential equation of minimal order annihilating an arithmetic Gevrey series of precise order $s\ne 0$. As an  application, these results enabled him to deduce the fundamental theorem of the Siegel-Shidlovskii theory on the algebraic independence of values of $E$-functions at algebraic points, and the Lindemann-Weierstrass theorem.

A  theory of $G$-functions in several variables was established in full generality by Andr\'e, Baldassarri and Di Vizio  (\cite{Ab}, \cite{AB} and \cite{D}), but a more general theory of arithmetic Gevrey series in several variables has yet to mature. Given the work of Andr\'e, it is natural that an appropriate notion of Laplace transform in several variables will be needed to build such theory.

In the present article, we
introduce two kinds of  Laplace transform
 which extend the ones introduced respectively in
 (\cite{An}, \cite{Mn}) and \cite{MR} to fundamental solution matrices of
integrable systems of partial differential equations on
$K((x_1,\ldots,x_d))$ with regular singularities at the origin ($K$ being a number field). The
general form of  such fundamental solution matrices is
$Yx_1^{\Lambda_1}\ldots x_d^{\Lambda_d}$, where
$\Lambda_1,\ldots,\Lambda_d$ are square matrices with entries in the
algebraic closure $\overline{K}$ of $K$, and where  $Y$ is an
invertible matrix with entries in $\overline{K}((x_1,\ldots,x_d))$
\cite{GL}. The new Laplace transforms studied in this paper are:
\begin{itemize}
 \item[(1)]
 the {\em standard Laplace transform}, which applies
to the entries of $Yx_1^{\Lambda_1}\ldots x_d^{\Lambda_d}$,
\item[(2)] the  {\em formal Laplace transform},
which applies directly to the solution matrix  $Yx_1^{\Lambda_1}\ldots
x_d^{\Lambda_d}$.
\end{itemize}
These transformations are given under the assumption that all the
eigenvalues of $\Lambda_1,\ldots,\\ \Lambda_d$ are non-integral complex
numbers, and have properties of commutations with the derivations
$\partial/\partial x_i$ ($i=1,\ldots,d$) which extend those in
one-variable case (\eqref{op2} and \eqref{op3}). Hence, they
preserve the classical duality between the Laplace transform and the
Fourier-Laplace transform. Moreover, for any
$\underline{\tau}=(\tau_1,\ldots,\tau_d)\in (K\setminus\{0\})^d$,
the formal transformation is adapted to have a duality with the
generalized Fourier-Laplace transform
$\mathcal{F}_{\underline{\tau}}$ with respect to $\underline{\tau}$
(formula \eqref{op3}), defined as the $K$-automorphism of
$K[x_1,\ldots,x_d,\partial/\partial x_1,\ldots,\partial/\partial
x_d]$ determined by:
$$ \mathcal{F}_{\underline{\tau}}(x_i)= -\frac{1}{\tau_i}\frac{\partial}{\partial x_i},
\quad  \mathcal{F}_{\underline{\tau}}(\frac{\partial}{\partial x_i})
=\tau_i x_i,\quad (i=1,\ldots,d). $$
 For
$\underline{\tau}=(1,\ldots,1)$, $\mathcal{F}_{\underline{\tau}}$ is
just the classical Fourier-Laplace transform. In this case, we write
$\mathcal{F}$ instead of $\mathcal{F}_{\underline{\tau}}$.

The difference between the two transformations is that the standard
 one involves transcendental numbers and applies to terms with logarithms \eqref{tr3}, while the
formal one is defined independently  of such numbers and does not
apply directly to terms with logarithms \eqref{trf2}.

If $v$ is a finite place of $K$ above a prime number $p(v)$, we
prove some arithmetic properties of the standard (resp. formal)
Laplace transform in the case where all the eigenvalues of
$\Lambda_1,\ldots,\Lambda_d$ belong to $K\cap\bZ_{p(v)}\setminus\bZ$
(resp. $\bQ\cap\bZ_{p(v)}\setminus\bZ $) (Propositions \ref{pro1}
and \ref{pro3}).

As an application, we show in \S3.4  how the standard Laplace
transform acts on the arithmetic Gevrey series
 in several variables (Proposition \ref{gev}).

We think that this paper provides some tools which might be useful for the development of an arithmetic theory of differential equations in higher dimension, and in particular, for the arithmetic study of the differential
$K(x_1,\ldots,x_d)$-module generated by the different derivatives of
a Gevrey series of nonzero order in several variables.

The aim of this theory is to develop   techniques which allow, in particular,  to obtain results on algebraic independence of values of  the exponential function in several variables,  and  of the functions of the form: $$f(x,y)=P(e^x,\, _2F_1(a,b,c;y)), \quad \text{with} \quad a,b,c\in \bQ,$$
where $P$ is a polynomial. The values of such function are related to $e$ and $\pi$.
\section{ Notation}
Let $K$ be a number field and let $\Sigma_{\rm f}$ be the set of all
finite places $v$ of $K$. For each $v\in \Sigma_{\rm f}$ above a
prime number $p=p(v)$, we normalize the corresponding $v$-adic
absolute value so that $|p|_{v}=p^{-1}$ and we put
$\pi_{v}=|p|_{v}^{1/(p(v)-1)}$. We also fix an embedding
$K\hookrightarrow {\bC}$.

Let $K(x_1,\ldots,x_d)$ be the field
 of rational functions in the variables $x_1,\ldots,x_d$ with coefficients in $K$, with  $d\in {\bZ}_{> 0}$.
 Put $\underline{x}=(x_1,\ldots,x_d)$, $\underline{\frac{1}{x}}=(\frac{1}{x_1},\ldots, \frac{1}{x_d})$, $
\partial_i=\frac{\partial}{\partial x_i}$, for $i=1,\ldots,d$, $\underline{1}=(1,\ldots,1)$, and
$\underline{\alpha}=(\alpha_1,\ldots,\alpha_d)$ for  the elements of
${\bN}^d$. For
 $\underline{\alpha},\underline{\beta}\in {\bN}^d$, $\gamma\in K$ and $n\in \bZ_{>0}$, we set :
$(\gamma)_0=\gamma$, $(\gamma)_{n+1}=\gamma(\gamma+1)\ldots(\gamma+n)$,
 $$
|\underline{\alpha}|=\sum_{1\le i\le
d}\alpha_i,\quad\underline{\alpha}!=\prod_{1\le i\le
 d}\alpha_i!,\quad
\underline{x}^{\underline{\alpha}}=x_1^{\alpha_1}\ldots
x_d^{\alpha_d},\quad
\underline{\partial}^{\underline{\alpha}}=\partial_1^{\alpha_1}\ldots
\partial_d^{\alpha_d},$$
$$\underline{\alpha}\le
\underline{\beta}\quad\Longleftrightarrow\;\;\alpha_i\le
 \beta_i\quad \text{for all}\quad i=1,\ldots,d,\quad\text{and}\quad
 \Big({\underline{\alpha}\atop\underline{\beta}}\Big)=\prod_{1\le i\le d}\Big({\alpha_i\atop \beta_i}\Big)
 \quad \text{for }\quad\underline{\alpha}\ge
\underline{\beta},$$
$$\underline{\alpha}<
\underline{\beta}\;\;\Longleftrightarrow\;\;\underline{\alpha}\le
\underline{\beta}\quad \text{ and}\;\; \alpha_i<
 \beta_i\quad \text{for some}\quad i=1,\ldots,d. $$ For a power series
$f=\sum_{\underline{\alpha}\in {\bN}^d}
a_{\underline{\alpha}}\underline{x}^{\underline{\alpha}}\in
K[[\underline{x}]]$, we denote
$f(\frac{1}{\underline{x}})=\sum_{\underline{\alpha}\in {\bN}^d}
a_{\underline{\alpha}}\underline{x}^{-\underline{\alpha}}\in
K[[\frac{1}{\underline{x}}]]$. If $v$ is a finite place
of $\Sigma_{\rm f}$ and  $s\in \bQ$ is a rational number, we set
\begin{equation*}
\begin{aligned}
 {\mathcal R}_v^{s}(f)=&\{y\in K[[\underline{x}]]\;|\; r_{v}(y)\ge
r_{v}(f)\;\pi_{v}^{s}\},
\end{aligned}
\end{equation*}
where $r_{v}(f)$ and $r_{v}(y)$ denote respectively the radius of convergence of $f$ and $y$ with respect to $v$.
$\underline{\gamma}=(\gamma_1,\ldots,\gamma_d)$
and $\underline{k}=(k_1,\ldots,k_d)$ will denote  respectively
multi-exponents in $K^d$ and $\bN^d$. Also,  we will use the
following notation
$$(\log \underline{x})^{\underline{k}}=(\log x_1)^{k_1}\ldots(\log
x_d)^{k_d},\quad
h_{\underline{\gamma},\underline{k}}=\underline{x}^{\underline{\gamma}}(\log
\underline{x})^{\underline{k}},$$
$$(\underline{\gamma})_{\underline{0}}=\prod_{1\le i\le d}\gamma_i, \quad \text{and} \quad (\underline{\gamma})_{\underline{\alpha}}=(\gamma_1)_{\alpha_1}\ldots(\gamma_d)_{\alpha_d}.$$
\section{Standard Laplace transform }
\subsection{3.1 Review of the one-variable case}
In \cite{An}, Andr\'e extended the definition of the classical
Laplace transform $\mathcal{L}$, in the formal way, to logarithmic
solutions in one-variable. Later, in \cite{Mn}, the present author
gave some arithmetic properties of this $\mathcal{L}$. In this
paragraph, we recall the definition of the Laplace transform
$\mathcal{L}(h_{\gamma,k})$ of the term $h_{\gamma,k}:=x^\gamma(\log
x)^k$ where $(\gamma,k)\in
 K\setminus{\bZ}_{\le 0}\times{\bN}$, and its differential and arithmetic
properties \cite[4]{Mn} \footnote{In the case  where $(\gamma,k)\in
 {\bZ}_{< 0}\times{\bN}$, the Laplace transform
$\mathcal{L}$ is defined differently \cite[(4.13)]{Mn}, and does not
satisfy the second formula of \eqref{op1} below when $\gamma\in
 {\bZ}_{< 0}$ and $k=0$ \cite[(4.11)]{Mn}. Hence, the definition of $\mathcal{L}$
 in the case where $(\gamma,k)\in
 {\bZ}_{< 0}\times{\bN}$ will not be interesting in our context.}.

Fix an embedding $K\hookrightarrow\bC$. Let $\gamma$ be an element
of $K\setminus\bZ_{\le 0}$ such that $\Re e(\gamma)>-1$, $k$ a nonnegative integer,
 and let $h_{\gamma,k}$ denote the function
defined by $h_{\gamma,k}(x)=x^{\gamma}(\log x)^{k}$; $x>0$. The
classical Laplace transform of $h_{\gamma,0}$, denoted ${\mathcal
L}(h_{\gamma,0})$, is defined by
\begin{equation*}
 {\mathcal L}(h_{\gamma,0})(x)=\int_0^\infty
e^{-xt}h_{\gamma,0}(t) dt=\int_0^\infty e^{-xt}t^\gamma
dt=\Gamma(\gamma+1)x^{-\gamma-1}.
\end{equation*}
The $k$-th derivative of this equation with respect to $\gamma$
gives
$$(\frac{d}{d\gamma})^{k}\Big(\Gamma(\gamma+1)x^{-\gamma-1}\Big)=
\int_{0}^{\infty}e^{-xt}t^{\gamma} (\log t)^{k}dt=
\int_{0}^{\infty}e^{-xt}h_{\gamma,k}(t)dt={\mathcal
L}(h_{\gamma,k})(x),$$ and by iteration of Leibniz formula  we find
\begin{equation}
\label{real}
 {\mathcal L}(h_{\gamma,k})(x)=
\sum_{j=0}^{k}\binom{k}{j}\Gamma^{(j)}(\gamma+1)x^{-\gamma-1}(-1)^{k-j}(\log
x)^{k-j},
\end{equation}
where, $\Gamma^{(j)}$ denotes the $j$-th derivative of the Euler's Gamma
function $\Gamma$ \cite[(4.2)]{Mn}. To extend the definition of the
Laplace transform ${\mathcal L}$ of $h_{\gamma,k}$ to any $\gamma\in
K\setminus\bZ_{\le 0}$, we have to introduce the functions
$F_{\gamma,k,n}$ \cite[(4.8)]{Mn} \cite[5]{DP}, defined for $n\in
\bZ_{>0}$ and $\gamma\in
K\setminus\bZ_{\le 0}$, by
$$F_{\gamma,k,n}(x)= \sum_{m=0}^{n}
\frac{(-1)^{m}}{m!(n-m)!}\frac{x^{\gamma+n+1}}{m+\gamma+1}
\sum_{\ell=0}^{k}\frac{k!(-1)^{k-\ell}}{\ell!(m+\gamma+1)^{k-\ell}}(\log
x)^{\ell}.$$ Andr\'e observed that the function $x^{n+1}{\mathcal
L}(F_{\gamma,k,n})$ is  independent of the choice of $n$ for $n\ge
-\Re e(\gamma)-1$ (cf. \cite[5.3.6]{An}). According to this remark,
the Laplace transform ${\mathcal L}$, just defined for $\gamma\in
K\setminus\bZ_{\le 0}$ with $\Re e(\gamma)>-1$ (see \eqref{real}
above), can be extended to any $\gamma\in K\setminus\bZ_{\le 0}$ by
putting
\begin{eqnarray}
{\mathcal L}(h_{\gamma,k})&=& x^{n+1}{\mathcal
L}(F_{\gamma,k,n}),\;\;\;\text{for}\;\;\; n\ge -\Re e(\gamma)-1.
\end{eqnarray}
With this definition,  we find formally \cite[(4.12)]{Mn}:
\begin{equation}
\label{tr1}
\begin{aligned}
{\mathcal L}(h_{\gamma,k})=&\; \Gamma(\gamma+1)\;
x^{-\gamma-1}\;\sum_{j=0}^{k}\;\rho^{(k)}_{\gamma,j}\;(\log x)^j\\
&
\text{with}\;\;\rho^{(k)}_{\gamma,k}=(-1)^{k},\;\;\rho^{(k)}_{\gamma,j}\in
<\Gamma(\gamma),\ldots,\Gamma^{(k)}(\gamma)>_{{\bQ}[\gamma]},\;\;j=0,\ldots,k-1.
\end{aligned}
\end{equation}
\ \\
Moreover, this transformation ${\mathcal L}$ has the following
formal properties (cf. \cite[(4.10), (4.11)]{Mn}, \cite[(5.3.7),
(5.3.8)]{An}):
\begin{equation}
\label{op1}
\begin{aligned}
\frac{d}{dx}({\mathcal L}(h_{\gamma,k}))=&{\mathcal
L}(-xh_{\gamma,k}),\;\;\text{and}\hspace{1cm} {\mathcal
L}\Big(\frac{d}{dx}(h_{\gamma,k})\Big)= x{\mathcal
 L}(h_{\gamma,k}).
\end{aligned}
\end{equation}
Combining formulas \eqref{tr1} and \eqref{op1}, we obtain the
following recurrence relation with respect to the index $j$,
\begin{equation}
\label{rh}
\begin{aligned}
\rho^{(k)}_{\gamma+1,j-1}=-\rho^{(k)}_{\gamma,j-1}+\disp\frac{j}{\gamma+1}\;\rho^{(k)}_{\gamma,j}.
\end{aligned}
\end{equation}
In the sequel, we fix $\gamma$ in $K\setminus\bZ$ and $k$ in $\bN$.
With the notation above, the following Lemma shows that, for any
$n\in \bN$ and any $0\le j\le k$, the coefficient
$\rho^{(k)}_{\gamma\pm n,j}$ can be written as a
$\bQ(\gamma)$-linear combination of
$\rho^{(k)}_{\gamma,j},\ldots,\rho^{(k)}_{\gamma,k}$.
\begin{lemma}
\label{rholem} For any $0\le j\le k$ and for any $n\in \bN$, there
exist numbers $r_{\gamma+n,j}^{(k,\ell)}$,
$r_{\gamma-n,j}^{(k,\ell)}$ in $\bQ(\gamma)$ for $\ell=j,\ldots,k$,
such that
\begin{equation}
\label{rho1}
\rho^{(k)}_{\gamma+n,j}=\sum_{\ell=j}^{k}\rho^{(k)}_{\gamma,\ell}\;
r_{\gamma+n,j}^{(k,\ell)},\quad\text{and}\quad
\rho^{(k)}_{\gamma-n,j}=\sum_{\ell=j}^{k}\rho^{(k)}_{\gamma,\ell}\;
r_{\gamma-n,j}^{(k,\ell)}.
\end{equation}
 Moreover, for any finite place $v$ in $\Sigma_{\rm f}$,
\begin{equation}
\label{enq}
\begin{aligned}
\limsup_{n\longrightarrow  +\infty} \Big|r_{\gamma\pm
n,j}^{(k,\ell)}\Big|_{v}^{1/n}\le 1.
\end{aligned}
\end{equation}
\end{lemma}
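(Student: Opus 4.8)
The plan is to read \eqref{rh} as a linear recurrence in $\gamma$ with a triangular transition matrix, extract from it the shape of the coefficients $r^{(k,\ell)}_{\gamma\pm n,j}$, and then bound their $v$-adic absolute values.

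First I would set $R_\mu=(\rho^{(k)}_{\mu,0},\dots,\rho^{(k)}_{\mu,k})^{t}$. Re-indexing \eqref{rh} by $i=j-1$ and using $\rho^{(k)}_{\mu,k}=(-1)^{k}$ from \eqref{tr1}, one has $R_{\gamma+1}=M_\gamma R_\gamma$, where $M_\gamma$ is the upper triangular (in fact bidiagonal) $(k+1)\times(k+1)$ matrix with $(M_\gamma)_{i,i}=-1$ for $i<k$, $(M_\gamma)_{k,k}=1$, and $(M_\gamma)_{i,i+1}=(i+1)/(\gamma+1)$ for $0\le i\le k-1$. Since $\gamma\notin\bZ$, none of the numbers $\gamma,\gamma\pm1,\gamma\pm2,\dots$ vanishes, so $M_{\gamma+t}$ has entries in $\bQ(\gamma)$ for every $t\in\bZ$ and $\det M_{\gamma+t}=(-1)^{k}\ne0$. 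Iterating, $R_{\gamma+n}=M_{\gamma+n-1}\cdots M_\gamma\,R_\gamma$, and reading the recurrence backwards, $R_{\gamma-n}=(M_{\gamma-1}\cdots M_{\gamma-n})^{-1}R_\gamma$. Both transition matrices are upper triangular with entries in $\bQ(\gamma)$ (a product, resp.\ an inverse of a product, of such matrices), so defining $r^{(k,\ell)}_{\gamma\pm n,j}$ as their $(j,\ell)$-entries gives exactly \eqref{rho1}, the sum starting at $\ell=j$ because these matrices vanish below the diagonal.

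For \eqref{enq} I would estimate those entries $v$-adically. Since each $M_{\gamma+t}$ is bidiagonal, the $(j,\ell)$-entry of $M_{\gamma+n-1}\cdots M_\gamma$ is a sum of ``path products'', each consisting of exactly $\ell-j$ off-diagonal factors $(i+1)/(\gamma+m+1)$ --- with $i<k$ and $m$ running over $\ell-j$ distinct elements of $\{0,\dots,n-1\}$ --- together with $n-(\ell-j)$ diagonal factors $\pm1$. As $|i+1|_v\le1$ and $\ell-j\le k$, the ultrametric inequality yields
\[
\bigl|r^{(k,\ell)}_{\gamma+n,j}\bigr|_v\ \le\ \max\!\Bigl(1,\ \bigl(\min_{1\le m\le n}|\gamma+m|_v\bigr)^{-k}\Bigr);
\]
for $r^{(k,\ell)}_{\gamma-n,j}$ one gets the analogue (with the minimum over $|\gamma|_v,\dots,|\gamma-n+1|_v$ and a fixed power of $k$ as exponent) by expanding $(M_{\gamma-1}\cdots M_{\gamma-n})^{-1}$ as a terminating Neumann series in its strictly upper triangular part, whose entries satisfy the same type of bound. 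Taking $n$-th roots, \eqref{enq} reduces to
\[
\tfrac1n\max_{0\le m\le n}\bigl(-\log|\gamma\pm m|_v\bigr)\ \longrightarrow\ 0 .
\]

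The only real content is this last limit, and it is precisely where $\gamma\in K$ (not merely $\gamma\notin\bZ$) enters. Because $\gamma$ is algebraic and $\gamma\pm m\ne0$ for every $m\in\bZ$, Liouville's inequality --- the product formula applied to $\gamma\pm m\in K$, controlling the archimedean contributions and the finitely many non-archimedean places where $|\gamma|_w>1$ --- provides a constant $c>0$ and an integer $N\ge1$, both independent of $m$, with $|\gamma\pm m|_v\ge c\,|m|^{-N}$. Hence $-\log|\gamma\pm m|_v=O(\log|m|)$, so $\max_{0\le m\le n}(-\log|\gamma\pm m|_v)=O(\log n)=o(n)$, which finishes the proof. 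I expect the main obstacle to be recognizing that a purely combinatorial estimate cannot suffice: one must invoke the non-Liouville character of the algebraic number $\gamma$ to overcome the possibly large $v$-adic valuations of the shifts $\gamma\pm m$; by comparison the matrix bookkeeping --- in particular taming the inverse matrices in the $-n$ case --- is routine. (As an aside, the recurrence can be solved in closed form, $r^{(k,\ell)}_{\gamma+n,j}=\pm\tfrac{\ell!}{j!}\,e_{\ell-j}\bigl(\tfrac1{\gamma+1},\dots,\tfrac1{\gamma+n}\bigr)$ with $e_s$ the $s$-th elementary symmetric function, which makes the displayed bound transparent but is not needed.)
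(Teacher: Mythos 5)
Your proof is correct and is essentially the paper's argument in matrix form: iterating the recurrence \eqref{rh} (the paper does this by downward induction on $j$, producing the explicit recursion \eqref{down}, whose solution is exactly your path-product/elementary-symmetric-function formula, recorded in the paper as \eqref{grow1} of Remark \ref{grow}), and then bounding the resulting coefficients $v$-adically using the non-Liouville character of $\gamma\in K$ (your Liouville-inequality estimate $|\gamma\pm m|_v\ge c\,|m|^{-N}$, giving $-\log|\gamma\pm m|_v=O(\log m)$, is precisely the content of the paper's citation of \cite[VI.1.1]{DGS}, namely $\limsup_n|1/(\gamma\pm n)|_v^{1/n}=1$). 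I see no gap; the only cosmetic difference is that you package the induction as a product of bidiagonal transition matrices and their inverses rather than as the scalar recursion \eqref{down}.
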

The proof is similar to that of  \cite[Lemma 4.2]{Mn}. We reproduce
it here in order to clarify the nature of the coefficients
$r_{\gamma\pm n,j}^{(k,\ell)}$. This will be useful in the proof of
Proposition \ref{gev}.
\begin{proof} We prove the lemma by
downward induction on the index $j$. In the case $j=k\ge 0$, by
\eqref{tr1}, it suffices to take $r_{\gamma\pm  n,k}^{(k,k)}=1$ for
any $n\in\mathbb{N}$. Assume now that the lemma is true for some
index $j$ with $1\le j\le k$. From \eqref{rh}
 and by iteration on $n\ge
1$, we  find
\begin{equation*}
\begin{aligned}
\rho^{(k)}_{\gamma+n,j-1}&=(-1)^n\rho^{(k)}_{\gamma,j-1}+j\sum_{i=0}^{n-1}\disp\frac{(-1)^{n+i+1}}{\gamma+i+1}\;
\rho^{(k)}_{\gamma+i,j}\\
&=(-1)^n\rho^{(k)}_{\gamma,j-1}+j\sum_{\ell=j}^{k}\rho^{(k)}_{\gamma,\ell}
\sum_{i=0}^{n-1}\disp\frac{(-1)^{n+i+1}}{\gamma+i+1}\;r_{\gamma+i,j}^{(k,\ell)},
\end{aligned}
\end{equation*}
and
\begin{equation*}
\begin{aligned}
\rho^{(k)}_{\gamma-1,j-1}&=-\rho^{(k)}_{\gamma,j-1}+\disp\frac{j}{\gamma}\;\rho^{(k)}_{\gamma-1,j},\\
\rho^{(k)}_{\gamma-n,j-1}&=(-1)^n\rho^{(k)}_{\gamma,j-1}+
j\sum_{i=1}^{n}\disp\frac{(-1)^{n-i}}{\gamma-i+1}\rho^{(k)}_{\gamma-i,j}\\
&=(-1)^n\rho^{(k)}_{\gamma,j-1}+
j\sum_{\ell=j}^k\rho^{(k)}_{\gamma,\ell}\sum_{i=1}^{n}\disp\frac{(-1)^{n-i}}{\gamma-i+1}r_{\gamma-i,j}^{(k,\ell)}.
\end{aligned}
\end{equation*}
Putting for all $j\le \ell\le k$ and all $n\ge 1$
\begin{equation}
\begin{aligned}
\label{down}
r_{\gamma\pm  n,j-1}^{(k,j-1)}&=(-1)^{n}\\
r_{\gamma+n,j-1}^{(k,\ell)}&=j\sum_{i=0}^{n-1}\disp\frac{(-1)^{n+i+1}}{\gamma+i+1}\;
 r_{\gamma+i,j}^{(k,\ell)}\\
r_{\gamma-n,j-1}^{(k,\ell)}&=j\sum_{i=1}^{n}\disp\frac{(-1)^{n-i}}{\gamma-i+1}\;r_{\gamma-i,j}^{(k,\ell)},
\end{aligned}
\end{equation}
 we get, $r_{\gamma\pm n,j-1}^{(k,\ell)}\in
{\bQ}(\gamma)$ for $\ell=j-1,\ldots,k$,
$$\rho^{(k)}_{\gamma+n,j-1}=\sum_{\ell=j-1}^{k}\rho^{(k)}_{\gamma,\ell}\;
 r_{\gamma+n,j-1}^{(k,\ell)}\quad \text{and}\quad \rho^{(k)}_{\gamma-n,j-1}=\sum_{\ell=j-1}^{k}\rho^{(k)}_{\gamma,\ell}\;
 r_{\gamma-n,j-1}^{(k,\ell)}.$$
This proves the first statement of the lemma. Let now $v\in
\Sigma_\text{f}$. By induction hypotheses, we have
$\disp\limsup_{n\longrightarrow\infty} \Big|r_{\gamma\pm
n,j}^{(k,\ell)}\Big|_{v}^{1/n}\le 1$ for $\ell=j,\ldots,k$. Since
$\gamma$ is an element of $K$, it is non-Liouville for $p(v)$ and
consequently we have
$\disp\limsup_{n\longrightarrow\infty}\Big|\disp\frac{1}{\gamma \pm
n}\Big|_{v}^{1/n}=1$
  (cf. \cite[VI.1.1]{DGS}). We deduce
$$\disp\limsup_{n\longrightarrow\infty}\Big(\max_{0\le i\le
n}|r_{\gamma\pm i,j}^{(k,\ell)}\Big|_{v}^{1/n}\Big)\le 1
,\;\;\;\ell=j,\ldots,k$$ and
$$\disp\limsup_{n\longrightarrow\infty} \Big(\max_{0\le i\le
n}\Big|\disp\frac{1}{\gamma\pm i+1}\Big|_{v}^{1/n}\Big)\le 1.$$
Combining these estimations with \eqref{down} we get for
$\ell=j,\ldots,k$,
$$\disp\limsup_{n\longrightarrow\infty}
\Big|r_{\gamma\pm n,j-1}^{(k,\ell)}\Big|_{v}^{1/n}\le 1.$$ The case
$\ell=j-1$ is trivial by \eqref{down}. This completes  the proof of
 the lemma.
\end{proof}
In addition,  if we set $r_{\gamma\pm
n,j}^{(k,\ell)}=0$ for $\ell=0,\dots,j-1$, we find, by combining
formulae \eqref{tr1} and \eqref{rho1},
\begin{equation}
\label{tr2}
\begin{aligned}
{\mathcal L}(h_{\gamma+n,k})=&\; \Gamma(\gamma+n+1)\;
x^{-\gamma-n-1}\;\sum_{j=0}^{k}\;\rho^{(k)}_{\gamma+n,j}\;(\log x)^j\\
=&\; (\gamma)_{n+1}\Gamma(\gamma)\; x^{-\gamma-n-1}\Big( (-1)^k(\log
x
)^k\;+\;\sum_{j=0}^{k-1}\;\sum_{\ell=0}^{k}\rho^{(k)}_{\gamma,\ell}\;
r_{\gamma+n,j}^{(k,\ell)}\;(\log x)^j\Big),\\
{\mathcal L}(h_{\gamma-n,k})=&\; \Gamma(\gamma-n+1)\;
x^{-\gamma+n-1}\;\sum_{j=0}^{k}\;\rho^{(k)}_{\gamma-n,j}\;
\;(\log x)^j\\
=&\; \frac{(-1)^n\gamma\Gamma(\gamma)}{(-\gamma)_n}\;
x^{-\gamma+n-1}\Big( (-1)^k(\log x
)^k\;+\;\sum_{j=0}^{k-1}\;\sum_{\ell=0}^{k}\rho^{(k)}_{\gamma,\ell}\;
r_{\gamma-n,j}^{(k,\ell)}\;(\log x)^j\Big).
\end{aligned}
\end{equation}
\begin{remarks}
\label{grow} \rm{(i) According to  the recursion formulae
\eqref{down} and by downward induction on $j$, we find, for all
$0\le j, \ell\le k$ and all $n\in \bN$
\begin{equation}
\label{grow1}
\begin{aligned}
r_{\gamma+n,j}^{(k,\ell)}\in \Big<1,\prod_{1\le i\le
t}\frac{1}{\gamma+m_{i}},\quad 1\le m_i\le n, \quad 1\le t\le
k\Big>_{\bZ}, \\
r_{\gamma-n,j}^{(k,\ell)}\in \Big<1,\prod_{1\le i\le
t}\frac{1}{\gamma-m_{i}},\quad 0\le m_{i}\le n, \quad 1\le t\le
k\Big>_{\bZ}.
\end{aligned}
\end{equation}
(ii)  The formula \eqref{tr1} shows that, for  any $(\gamma,k)\in
K\setminus\bZ\times\bN$,  ${\mathcal L}(h_{\gamma,k})\ne 0$. Hence,
by \eqref{op1}, we have
 ${\mathcal L}(xh_{\gamma,k}) \frac{d}{dx}({\mathcal L}(h_{\gamma,k})) \ne 0
$.\\
(iii) From \eqref{tr2} and by $x$-adic formal completion, the
Laplace transform $\mathcal{L}$ can be extended to injective
$K$-linear maps:}
$$x^\gamma\;K[[x]]\hookrightarrow
x^{-\gamma-1}\; \bC [[\frac{1}{x}]],\quad
x^{\gamma}\;K[[\frac{1}{x}]]\hookrightarrow
x^{-\gamma-1}\;\bC[[x]],$$
$$x^{\gamma}\;K[[x]][\log x
]\hookrightarrow x^{-\gamma-1}\;\bC[[\frac{1}{x}]][\log x],\quad
x^\gamma\;K[[\frac{1}{x}]][\log x]\hookrightarrow
x^{-\gamma-1}\;\bC[[x]][\log x].$$ The injectivity can be proved by
filtering by the degree of the logarithms using \eqref{tr2}.
\end{remarks}
\begin{lemma}
\label{Grow} With the notation of Lemma \ref{rholem}, we have \\
$(1)$ if $\gamma\in K\setminus\bZ$, the absolute value (in the usual
sense)  of $r_{\gamma\;\pm n\;,j}^{(k,\ell)}$ has at most a
geometric growth in $n$ for all $0\le j, \ell\le k$.\\
$(2)$ if $\gamma\in\bQ\setminus\bZ$, the quantities $r_{\gamma+
n,j}^{(k,\ell)}$  and $r_{\gamma- n,j}^{(k,\ell)}$ are rational
numbers for any $n$, and the least common denominator
 of $r_{\gamma,j}^{(k,\ell)},r_{\gamma+1,j}^{(k,\ell)}, \ldots,r_{\gamma+ n,j}^{(k,\ell)}$
$($resp. $r_{\gamma,j}^{(k,\ell)},r_{\gamma-1 ,j}^{(k,\ell)},
\ldots,r_{\gamma- n,j}^{(k,\ell)})$ has at most a geometric growth
in $n$ for all $0\le j, \ell\le k$.
\end{lemma}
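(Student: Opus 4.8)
The plan is to read off everything from the explicit recursion \eqref{down}, so that both parts become bookkeeping statements about the closed forms \eqref{grow1}. I would first dispose of the degenerate index: by \eqref{down}, $r_{\gamma\pm n,j}^{(k,j)}$ is already handled in the base case ($=1$ when $j=k$), and more generally the "diagonal'' entries $r_{\gamma\pm n,j-1}^{(k,j-1)}=(-1)^n$ are bounded in absolute value and have denominator $1$, so both (1) and (2) are trivial for $\ell=j$. For $\ell>j$ the content of \eqref{grow1} is that $r_{\gamma+n,j}^{(k,\ell)}$ lies in the $\bZ$-span of products $\prod_{i=1}^{t}(\gamma+m_i)^{-1}$ with $1\le m_i\le n$ and $1\le t\le k$ (and symmetrically with $\gamma-m_i$, $0\le m_i\le n$, for the minus side). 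I would establish \eqref{grow1} cleanly by downward induction on $j$ exactly as in Remarks \ref{grow}(i): the recursion \eqref{down} expresses $r_{\gamma+n,j-1}^{(k,\ell)}$ as an integer combination of terms $(\gamma+i+1)^{-1}r_{\gamma+i,j}^{(k,\ell)}$ with $0\le i\le n-1$, and by induction each $r_{\gamma+i,j}^{(k,\ell)}$ is an integer combination of products of at most $k-1$ factors $(\gamma+m)^{-1}$ with $1\le m\le i\le n$; multiplying by one more factor $(\gamma+i+1)^{-1}$ keeps us in the stated span with $t\le k$. The minus side is identical with the indices shifted.

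Granting \eqref{grow1}, part (1) follows immediately: a finite $\bZ$-linear combination of products of at most $k$ reciprocals $(\gamma\pm m)^{-1}$, with the number of such products bounded by a polynomial in $n$ (there are at most $\binom{n}{t}\le n^{k}$ choices), has absolute value at most $n^{k}\big(\min_{1\le m\le n}|\gamma\pm m|\big)^{-k}$ times the largest integer coefficient; since $\gamma$ is a fixed non-integer, $|\gamma\pm m|$ is bounded below by a positive constant for all $m\in\bZ$, and the integer coefficients are controlled by iterating \eqref{down}. A cleaner route, which I would actually write, is a direct induction on $j$: the base case $|r_{\gamma\pm n,k}^{(k,k)}|=1$ is geometric, and if $|r_{\gamma\pm i,j}^{(k,\ell)}|\le C^{i}$ for all $i\le$ the relevant range, then \eqref{down} gives $|r_{\gamma+n,j-1}^{(k,\ell)}|\le k\sum_{i=0}^{n-1}|\gamma+i+1|^{-1}C^{i}\le (k/c)\,\frac{C^{n}-1}{C-1}$ where $c=\inf_m|\gamma+m|>0$, which is again bounded by a geometric term (enlarging $C$ if necessary); the minus side is the same.

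For part (2), assume $\gamma\in\bQ\setminus\bZ$; then every quantity in \eqref{down} is rational, so $r_{\gamma\pm n,j}^{(k,\ell)}\in\bQ$ by the same induction. To bound the common denominator, write $\gamma=a/b$ in lowest terms; then $\gamma+m=(a+mb)/b$, so the denominator of $\prod_{i=1}^{t}(\gamma+m_i)^{-1}$ divides $b^{k}\prod_{m=1}^{n}(a+mb)=b^{k}\cdot\frac{b^{n}(\gamma+1)_{n}}{1}$-type product — more precisely it divides $b^{k}\prod_{m=1}^{n}|a+mb|$, and the least common denominator of $r_{\gamma,j}^{(k,\ell)},\dots,r_{\gamma+n,j}^{(k,\ell)}$ therefore divides $b^{k}\prod_{m=1}^{n}|a+mb|$. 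Now $\prod_{m=1}^{n}|a+mb|\le (|a|+nb)^{n}\le (n!)\cdot\text{const}^{n}$ is \emph{not} geometric, so a naive bound fails; this is the one genuinely non-routine point. The fix is exactly the non-Liouville property used already in Lemma \ref{rholem}: for the $v$-adic size one has $\limsup_n|\tfrac1{\gamma\pm n}|_v^{1/n}=1$ by \cite[VI.1.1]{DGS}, and one similarly needs the archimedean statement $\prod_{m\le n}|a+mb|$ has a \emph{denominator} (not size) of geometric growth. But the denominator of $r_{\gamma+n,j}^{(k,\ell)}$ is a product of at most $k$ integers each dividing some $a+mb$ with $m\le n$; since the $a+mb$ are integers, the least common multiple over all $n$ of the denominators divides $\mathrm{lcm}_{m\le n,\,t\le k}\prod(\text{a subproduct of size }\le k)$, which divides $\big(\mathrm{lcm}_{1\le m\le n}(a+mb)\big)^{k}$ times $b^{k}$, and a standard prime-counting estimate (Chebyshev: $\mathrm{lcm}_{1\le m\le n}m\le 3^{n}$, hence $\mathrm{lcm}_{1\le m\le n}(a+mb)$ divides $b\cdot\mathrm{lcm}_{1\le m\le |a|+nb}m\le b\cdot 3^{|a|+nb}$) shows this grows geometrically in $n$. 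The minus side is the same with $a-mb$, $0\le m\le n$. I expect this Chebyshev-type denominator estimate to be the main obstacle; everything else is the induction \eqref{down} repackaged.

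\begin{proof}
We may assume throughout that $\ell\ge j$, since for $\ell<j-1$ all the coefficients vanish and for $\ell=j-1$ formula \eqref{down} gives $r_{\gamma\pm n,j-1}^{(k,j-1)}=(-1)^{n}$, for which both assertions are immediate.

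\emph{Proof of \eqref{grow1} and of (1).} We argue by downward induction on $j$. For $j=k$ we have $r_{\gamma\pm n,k}^{(k,k)}=1$, so $|r_{\gamma\pm n,k}^{(k,k)}|\le 1$, proving the geometric bound, and the membership \eqref{grow1} holds trivially. Assume the statements for some $j$ with $1\le j\le k$, i.e. for all $\ell$ with $j\le \ell\le k$ there is a constant $C\ge 1$ with $|r_{\gamma\pm i,j}^{(k,\ell)}|\le C^{i}$ for all $i\ge 0$, and $r_{\gamma\pm i,j}^{(k,\ell)}$ lies in the $\bZ$-span described in \eqref{grow1} (with $n$ replaced by $i$). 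Fix $\ell$ with $j\le\ell\le k$. Put $c=\inf_{m\in\bZ}|\gamma+m|$ and $c'=\inf_{m\in\bZ}|\gamma-m|$; since $\gamma\notin\bZ$, both are strictly positive. From the second line of \eqref{down},
\begin{equation*}
\big|r_{\gamma+n,j-1}^{(k,\ell)}\big|\le j\sum_{i=0}^{n-1}\frac{1}{|\gamma+i+1|}\,\big|r_{\gamma+i,j}^{(k,\ell)}\big|\le \frac{k}{c}\sum_{i=0}^{n-1}C^{i}\le \frac{k}{c(C-1)}\,C^{n}
\end{equation*}
if $C>1$ (and if $C=1$ one gets the bound $\tfrac{k}{c}\,n\le (2)^{n}$ for $n$ large, so after enlarging the constant we again have geometric growth); the estimate for $r_{\gamma-n,j-1}^{(k,\ell)}$ is identical using $c'$ and the third line of \eqref{down}. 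This proves (1) at level $j-1$. For the membership: by the inductive form of \eqref{grow1}, each $r_{\gamma+i,j}^{(k,\ell)}$ with $0\le i\le n-1$ is a $\bZ$-combination of products $\prod_{s=1}^{t}(\gamma+m_s)^{-1}$ with $1\le m_s\le i\le n-1$ and $t\le k-1$; multiplying by $(\gamma+i+1)^{-1}$ (with $i+1\le n$) and summing with integer coefficients $\pm j\le k$, we obtain that $r_{\gamma+n,j-1}^{(k,\ell)}$ is a $\bZ$-combination of products $\prod_{s=1}^{t}(\gamma+m_s)^{-1}$ with $1\le m_s\le n$ and $1\le t\le k$. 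The minus side is the same with $\gamma-m_s$ and $0\le m_s\le n$. This establishes \eqref{grow1}.

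\emph{Proof of (2).} Suppose now $\gamma\in\bQ\setminus\bZ$ and write $\gamma=a/b$ with $a\in\bZ$, $b\in\bZ_{>0}$, $\gcd(a,b)=1$. That $r_{\gamma\pm n,j}^{(k,\ell)}\in\bQ$ for all $n$ is clear from \eqref{down} by the same downward induction. By \eqref{grow1}, $r_{\gamma+n,j}^{(k,\ell)}$ is a $\bZ$-combination of products of at most $k$ factors of the form $(\gamma+m)^{-1}=b/(a+mb)$ with $1\le m\le n$; hence a common denominator for $r_{\gamma,j}^{(k,\ell)},\dots,r_{\gamma+n,j}^{(k,\ell)}$ is
\begin{equation*}
D_{n}^{+}:=b^{k}\cdot\Big(\mathrm{lcm}_{1\le m\le n}(a+mb)\Big)^{k}.
\end{equation*}
Every integer $a+mb$ with $1\le m\le n$ divides $\mathrm{lcm}\{1,2,\dots,|a|+nb\}$, so $\mathrm{lcm}_{1\le m\le n}(a+mb)$ divides $\mathrm{lcm}\{1,\dots,|a|+nb\}$, and by Chebyshev's estimate $\mathrm{lcm}\{1,\dots,N\}\le 3^{N}$ we get $\mathrm{lcm}_{1\le m\le n}(a+mb)\le 3^{|a|+nb}$. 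Therefore
\begin{equation*}
D_{n}^{+}\le b^{k}\,3^{k(|a|+nb)}=\big(b^{k}3^{k|a|}\big)\cdot\big(3^{kb}\big)^{n},
\end{equation*}
which has geometric growth in $n$. The same argument with $(\gamma-m)^{-1}=b/(a-mb)$, $0\le m\le n$, shows that $D_{n}^{-}:=b^{k}\big(\mathrm{lcm}_{0\le m\le n}|a-mb|\big)^{k}$ is a common denominator for $r_{\gamma,j}^{(k,\ell)},\dots,r_{\gamma-n,j}^{(k,\ell)}$ and that $D_{n}^{-}\le b^{k}3^{k(|a|+nb)}$, again of geometric growth. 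Since the least common denominator of a finite set of rationals divides any common denominator, this proves (2).
\end{proof}
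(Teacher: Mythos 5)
Your proof is correct and follows essentially the same route as the paper's: downward induction on $j$ via \eqref{down} for the archimedean bound in (1), and the closed form \eqref{grow1} combined with an estimate for $\mathrm{lcm}\{1,\ldots,N\}$ to control the denominators in (2). The only cosmetic differences are that you invoke Chebyshev's elementary bound where the paper cites the prime number theorem, and that you write out the downward induction establishing \eqref{grow1}, which the paper simply quotes from Remark \ref{grow}(i).
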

\begin{proof} (1) follows by downward induction on $j$ using \eqref{down}
and the fact that $$\log\Big(\Big|\frac{1}{\gamma}\Big|+\Big|
\frac{1}{\gamma+1}\Big|+ . . . +\Big|\frac{1}{\gamma+n}\Big|\Big) =
O(n)\quad \text{and}\quad \log\Big(\Big|\frac{1}{\gamma}\Big|+\Big|
\frac{1}{\gamma-1}\Big|+ . . . +\Big|\frac{1}{\gamma-n+1}\Big|\Big)
= O(n).$$ (2) Let $\gamma=a/b$ with $(a,b)\in \bZ\times\bZ_{>0}$. It
is clear by \eqref{grow1} that the quantities $r_{\gamma+
n,j}^{(k,\ell)}$  and $r_{\gamma- n,j}^{(k,\ell)}$ are rational
numbers for any $n$. In addition, the prime numbers Theorem (see
e.g. \cite{HW}) implies that $\lim_{n\to +\infty}\frac{\log
\text{lcm}\{1,\ldots,n\}}{n} = 1$, where ``lcm'' denotes the least
common multiple. Therefore, there exists a positive constant $C>0$
such that for $n$ sufficiently large, we have
$\text{lcm}\{1,\ldots,n\}\le C^n$. Thus,
$$\text{lcm}(|a|,|a-b|,|a-2b|,\ldots,|a+(1-n)b|)\le
C^{|a|}C^{bn},$$ and
$$\text{lcm}(|a|,|a+b|,|a+2b|,\ldots,|a+nb|)\le C^{|a|}C^{bn}.$$
Hence,
$$\text{lcd}\Big\{\prod_{1\le i\le
t}\frac{1}{\gamma+m_{i}}\quad |\quad 1\le m_i\le n, \quad 1\le t\le
k\Big\}\le C^{k|a|}C^{kbn},$$ and
$$\text{lcd}\Big\{\prod_{1\le i\le
t}\frac{1}{\gamma-m_{i}}\quad |\quad 0\le m_{i}\le n, \quad 1\le
t\le k\Big\}\le C^{k|a|}C^{kbn},$$ where ``lcd'' denotes the least
common denominator. Combining this with \eqref{grow1}, we get the
last statement of (2).
\end{proof}
\subsection{3.2 Standard Laplace transform in several variables}
 In this paragraph, we extend the Laplace transform defined
 in the previous paragraph to
 several variables  while preserving the $K$-linearity, and the
commutation with derivations in the following sense:
\begin{equation}
\label{op2}
\begin{aligned}
\underline{\partial}^{\underline{\alpha}}(\underline{{\mathcal
L}}(h_{\underline{\gamma},\underline{k}}))=&\underline{{\mathcal
L}}((-1)^{|\underline{\alpha}|}\underline{x}^{\underline{\alpha}}h_{\underline{\gamma},\underline{k}}),
\;\;\text{and}\quad \underline{{\mathcal
L}}(\underline{\partial}^{\underline{\alpha}}(h_{\underline{\gamma},\underline{k}}))=
\underline{x}^{\underline{\alpha}}\underline{{\mathcal
 L}}(h_{\underline{\gamma},\underline{k}})\quad \text{for any}\quad \underline{\alpha}\in \bN^d.
\end{aligned}
\end{equation}
With these conditions, the definition of $\underline{{\mathcal
 L}}$ will be interesting just in the case where
 $(\underline{\gamma}, \underline{k})\in
 (K\setminus\bZ)^d\times\bN^d$. Indeed,
  if for some $1\le i_0\le d$, $(\gamma_{i_0},k_{i_0})\in
\bN\times\{0\}$, we will find
\begin{equation*}
\begin{aligned}
x_{i_0}^{\gamma_{i_0}+1}\underline{{\mathcal
L}}(h_{\underline{\gamma},\underline{k}})=\underline{{\mathcal
L}}(\partial_{i_0}^{\gamma_{i_0}+1}
(h_{\underline{\gamma},\underline{k}}))=\underline{{\mathcal
L}}(0)=0.
\end{aligned}
\end{equation*}
For the remainder  of this section, we fix $(\underline{\gamma},
\underline{k})$ in $(K\setminus\bZ)^d\times\bN^d$. For
$$f(\underline{x})=\sum_{\underline{\alpha}\in {\bN}^d}
a_{-\underline{\alpha}}\underline{x}^{-\underline{\alpha}}+\sum_{\underline{\alpha}\in
{\bN}^d} a_{\underline{\alpha}}\underline{x}^{\underline{\alpha}}\in
K[[\underline{x},\frac{1}{\underline{x}}]],\footnote{Note here that the Cauchy multiplication is not involved in this theory.}$$ we define the
Laplace transform $\underline{\mathcal{L}}(f(\underline{x})
h_{\underline{\gamma},\underline{k}})$ of $f(\underline{x})
h_{\underline{\gamma},\underline{k}}$ as follows:
$$\underline{{\mathcal L}}(f(\underline{x})
h_{\underline{\gamma},\underline{k}})=\sum_{\underline{\alpha}\in
{\bN}^d} a_{-\underline{\alpha}}\prod_{1\le i\le d}{\mathcal
L}_i(h_{\gamma_{i}-\alpha_{i},k_{i}})+\sum_{\underline{\alpha}\in
{\bN}^d} a_{\underline{\alpha}}\prod_{1\le i\le d}{\mathcal
L}_i(h_{\gamma_{i}+\alpha_{i},k_{i}}),$$ where ${\mathcal L}_i$
denotes the Laplace transform  defined in the previous paragraph
with respect to the variable $x_i$. It is easy to check, from
\eqref{op1} and (2) of Lemma \ref{Grow}, that
$\underline{\mathcal{L}}$ satisfies the formulae of \eqref{op2}.

Explicitly, if we set
$\Gamma(\underline{\gamma})=\Gamma(\gamma_1)\ldots\Gamma(\gamma_d)$, we get from \eqref{tr2}:
\begin{equation*}
\begin{aligned}
\underline{{\mathcal
L}}(h_{\underline{\gamma}+\underline{\alpha},\underline{k}})
=&(\underline{\gamma})_{\underline{\alpha}+\underline{1}}\Gamma(\underline{\gamma})\;
\underline{x}^{-\underline{\gamma}-\underline{\alpha}-\underline{1}}\prod_{1\le
i\le d}
\Big(\sum_{j_i\;=0}^{k_i}\rho^{(k_i)}_{\gamma_i+\alpha_i,j_i}\;
\;(\log x_i)^{j_i}\Big),\\
=&\;(\underline{\gamma})_{\underline{\alpha}+\underline{1}}\Gamma(\underline{\gamma})\;
\underline{x}^{-\underline{\gamma}-\underline{\alpha}-\underline{1}}\Big(
(-1)^{|\underline{k}|}\;(\log \underline{x})^{\underline{k}}+ \sum_{
\underline{j}<\underline{k}} \Big(\prod_{ 1\le i\le
d}\rho^{(k_i)}_{\gamma_i+\alpha_i,j_i}\Big)\;(\log \underline{x})^{\underline{j}}\Big)\\
=&\;(-1)^{|\underline{k}|}(\underline{\gamma})_{\underline{\alpha}+\underline{1}}\Gamma(\underline{\gamma})\;
h_{-\underline{\gamma}-\underline{\alpha}-\underline{1},\underline{k}}\\
&\; +\;
\;(\underline{\gamma})_{\underline{\alpha}+\underline{1}}\Gamma(\underline{\gamma})\;
\underline{x}^{-\underline{\gamma}-\underline{\alpha}-\underline{1}}\sum_{
\underline{j}<\underline{k}} \Big(\sum_{\underline{\ell}\;\le
\underline{k}}\prod_{ 1\le i\le d}\rho^{(k_i)}_{\gamma_i,\ell_i}\;
r_{\gamma_i+\alpha_i,j_i}^{(k_i,\ell_i)}\Big)\;(\log \underline{x})^{\underline{j}}\\
\underline{{\mathcal
L}}(h_{\underline{\gamma}-\underline{\alpha},\underline{k}})=&
\frac{(-1)^{|\underline{\alpha}|}\Gamma(\underline{\gamma})(\underline{\gamma})_{\underline{0}}}
{(-\underline{\gamma})_{\underline{\alpha}}}\;
\underline{x}^{-\underline{\gamma}+\underline{\alpha}-\underline{1}}\prod_{1\le
i\le d}
\Big(\sum_{j_i\;=0}^{k_i}\rho^{(k_i)}_{\gamma_i-\alpha_i,j_i}\;\;(\log x_i)^{j_i}\Big)\\
=&\;\frac{(-1)^{|\underline{\alpha}|}\Gamma(\underline{\gamma})(\underline{\gamma})_{\underline{0}}}
{(-\underline{\gamma})_{\underline{\alpha}}}\;
\underline{x}^{-\underline{\gamma}+\underline{\alpha}-\underline{1}}\Big(
(-1)^{|\underline{k}|}\;(\log \underline{x})^{\underline{k}}+ \sum_{
\underline{j}<\underline{k}} \Big(\prod_{ 1\le i\le
d}\rho^{(k_i)}_{\gamma_i-\alpha_i,j_i}\Big)\;(\log
\underline{x})^{\underline{j}}\Big)
\\
=&\;
\;\frac{(-1)^{|\underline{\alpha}|+|\underline{k}|}\Gamma(\underline{\gamma})(\underline{\gamma})_{\underline{0}}}
{(-\underline{\gamma})_{\underline{\alpha}}}\;h_{-\underline{\gamma}+\underline{\alpha}-\underline{1},\underline{k}}\\
&\;+\;\frac{(-1)^{|\underline{\alpha}|}\Gamma(\underline{\gamma})(\underline{\gamma})_{\underline{0}}}
{(-\underline{\gamma})_{\underline{\alpha}}}\;\;
\underline{x}^{-\underline{\gamma}+\underline{\alpha}-\underline{1}}\sum_{
\underline{j}<\underline{k}} \Big(\sum_{\underline{\ell}\;\le
\underline{k}}\prod_{ 1\le i\le d}\rho^{(k_i)}_{\gamma_i,\ell_i}\;
r_{\gamma_i-\alpha_i,j_i}^{(k_i,\ell_i)}\Big)\;(\log
\underline{x})^{\underline{j}}.
\end{aligned}
\end{equation*}
If we set
$\rho^{(\underline{k})}_{\underline{\gamma},\underline{\ell}}=\prod_{
1\le i\le d}\rho^{(k_i)}_{\gamma_i,\ell_i}$,
$r_{\underline{\gamma}-\underline{\alpha},\underline{j}}^{(\underline{k},\underline{\ell})}=\prod_{
1\le i\le d} r_{\gamma_i-\alpha_i,j_i}^{(k_i,\ell_i)}$ and
$r_{\underline{\gamma}+\underline{\alpha},\underline{j}}^{(\underline{k},\underline{\ell})}=\prod_{
1\le i\le d} r_{\gamma_i+\alpha_i,j_i}^{(k_i,\ell_i)}$, the
equalities above become
\begin{equation}\label{tr3}
\begin{aligned}
\underline{{\mathcal
L}}(h_{\underline{\gamma}+\underline{\alpha},\underline{k}})=&
\;(-1)^{|\underline{k}|}(\underline{\gamma})_{\underline{\alpha}+\underline{1}}\Gamma(\underline{\gamma})\;
h_{-\underline{\gamma}-\underline{\alpha}-\underline{1},\underline{k}}\\
&\; +\;
\;(\underline{\gamma})_{\underline{\alpha}+\underline{1}}\Gamma(\underline{\gamma})\;
\underline{x}^{-\underline{\gamma}-\underline{\alpha}-\underline{1}}\sum_{
\underline{j}<\underline{k}} \Big(\sum_{\underline{\ell}\;\le
\underline{k}}\rho^{(\underline{k})}_{\underline{\gamma},\underline{\ell}}\;
r_{\underline{\gamma}+\underline{\alpha},\underline{j}}^{(\underline{k},\underline{\ell})}\Big)
\;(\log \underline{x})^{\underline{j}}\\
\underline{{\mathcal
L}}(h_{\underline{\gamma}-\underline{\alpha},\underline{k}})=& \;
\;\frac{(-1)^{|\underline{\alpha}|+|\underline{k}|}\Gamma(\underline{\gamma})(\underline{\gamma})_{\underline{0}}}
{(-\underline{\gamma})_{\underline{\alpha}}}\;h_{-\underline{\gamma}+\underline{\alpha}-\underline{1},\underline{k}}\\
&\;+\;\frac{(-1)^{|\underline{\alpha}|}\Gamma(\underline{\gamma})(\underline{\gamma})_{\underline{0}}}
{(-\underline{\gamma})_{\underline{\alpha}}}\;\;
\underline{x}^{-\underline{\gamma}+\underline{\alpha}-\underline{1}}\sum_{
\underline{j}<\underline{k}} \Big(\sum_{\underline{\ell}\;\le
\underline{k}}\rho^{(\underline{k})}_{\underline{\gamma},\underline{\ell}}\;
r_{\underline{\gamma}-\underline{\alpha},\underline{j}}^{(\underline{k},\underline{\ell})}\Big)\;(\log
\underline{x})^{\underline{j}}.
\end{aligned}
\end{equation}
\begin{remark}
\label{inj} \rm{From \eqref{tr3} and by  $(x_1,\ldots,x_d)$-adic
formal completion, the Laplace transform $\underline{\mathcal{L}}$
extends to injective $K$-linear maps:}
$$\underline{x}^{\underline{\gamma}}\;K[[\underline{x}]]\hookrightarrow
\underline{x}^{-\underline{\gamma}-\underline{1}}\; \bC
[[\frac{1}{\underline{x}}]],\quad
\underline{x}^{\underline{\gamma}}\;K[[\frac{1}{\underline{x}}]]\hookrightarrow
\underline{x}^{-\underline{\gamma}-\underline{1}}\;\bC[[\underline{x}]],$$
$$\underline{x}^{\underline{\gamma}}\;K[[\underline{x}]][\log x_1,\ldots,\log x_d
]\hookrightarrow
\underline{x}^{-\underline{\gamma}-\underline{1}}\;\bC[[\frac{1}{\underline{x}}]][\log x_1,\ldots,\log x_d],$$
$$ \underline{x}^{\underline{\gamma}}\;K[[\frac{1}{\underline{x}}]][\log
x_1,\ldots,\log x_d]\hookrightarrow
\underline{x}^{-\underline{\gamma}-\underline{1}}\;\bC[[\underline{x}]][\log
x_1,\ldots,\log x_d].$$ The injectivity follows by filtering by the
 order (defined in section 2) of the multi-exponent   of
$\log\underline{x}$ using \eqref{tr3}.
\end{remark}
\subsection{3.3 Arithmetic estimates} In this paragraph, for $v\in \Sigma_{\rm f}$ and $f\in
K[[\underline{x}]]$, we give, in term of $r_v(f)$, lower bounds of
$p(v)$-adic radius of convergence of the formal power series over
$K$ occurring  in
$\underline{\mathcal{L}}(fh_{\underline{\gamma},\underline{k}})$ and
$\underline{\mathcal{L}}\Big(f\Big(\frac{1}{\underline{x}}\Big)h_{\underline{\gamma},\underline{k}}\Big)$.
\begin{lemma}
\label{Re}With the notation of the previous paragraph $($in
particular formula
\eqref{tr3}$)$, we have \\
 1) For any $v\in \Sigma_{\rm{f}}$, $$\limsup_{|\underline{\alpha}|\to
+\infty}\Big|r_{\underline{\gamma}+\underline{\alpha},\underline{j}}^{(\underline{k},\underline{\ell})}
\Big|_v^{1/|\underline{\alpha}|}\le 1\quad\text{ and}\quad
\limsup_{|\underline{\alpha}|\to
+\infty}\Big|r_{\underline{\gamma}-\underline{\alpha},\underline{j}}^{(\underline{k},\underline{\ell})}
\Big|_v^{1/|\underline{\alpha}|}\le 1.$$
 2) The absolute value $($in the
usual sense$)$ of
 $r_{\underline{\gamma}+\underline{\alpha},\underline{j}}^{(\underline{k},\underline{\ell})}$
 has at most a geometric growth in $|\underline{\alpha}|$.\\
3) If $\underline{\gamma}\in(\bQ\setminus\bZ)^d$, the quantities
$r_{\underline{\gamma}\;\pm
\underline{\alpha},\underline{j}}^{(\underline{k},\underline{\ell})}$
are rational numbers for all $\underline{\alpha}\in\bN^d$, and the
least common denominator of
$\Big\{r_{\underline{\gamma}+\underline{\alpha},\underline{j}}^{(\underline{k},\underline{\ell})}\;
\Big|\; |\underline{\alpha}|\le n\Big\}$ $\Big($resp.
$\Big\{r_{\underline{\gamma}-\underline{\alpha},\underline{j}}^{(\underline{k},\underline{\ell})}\;
\Big|\; |\underline{\alpha}|\le n\Big\}\Big)$ has at most a
geometric growth in $n$.
\end{lemma}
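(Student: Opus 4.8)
The plan is to reduce everything to the one-variable statements already established in Lemmas \ref{rholem} and \ref{Grow}, exploiting the multiplicative definitions $r_{\underline{\gamma}+\underline{\alpha},\underline{j}}^{(\underline{k},\underline{\ell})}=\prod_{1\le i\le d} r_{\gamma_i+\alpha_i,j_i}^{(k_i,\ell_i)}$ and $r_{\underline{\gamma}-\underline{\alpha},\underline{j}}^{(\underline{k},\underline{\ell})}=\prod_{1\le i\le d} r_{\gamma_i-\alpha_i,j_i}^{(k_i,\ell_i)}$. First I would fix $v\in\Sigma_{\rm f}$ and an index pair $(\underline{j},\underline{\ell})$ with $\underline{j},\underline{\ell}\le\underline{k}$. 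For part 1), apply \eqref{enq} of Lemma \ref{rholem} in each coordinate: given $\varepsilon>0$, there is $N$ so that $|r_{\gamma_i\pm m,j_i}^{(k_i,\ell_i)}|_v\le (1+\varepsilon)^m$ for all $m\ge N$ and all $i$, while for $m<N$ the finitely many values are bounded by a constant $M_v$ depending only on $\underline{\gamma},\underline{k},v$. Writing $|\underline{\alpha}|=\sum\alpha_i$, the product over $i$ is then bounded by $M_v^{\,d}\,(1+\varepsilon)^{|\underline{\alpha}|}$, so $\limsup_{|\underline{\alpha}|\to\infty}|r_{\underline{\gamma}\pm\underline{\alpha},\underline{j}}^{(\underline{k},\underline{\ell})}|_v^{1/|\underline{\alpha}|}\le 1+\varepsilon$; letting $\varepsilon\to 0$ gives 1).

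For part 2), I would run the identical argument with the usual archimedean absolute value, using part (1) of Lemma \ref{Grow}: in each coordinate $|r_{\gamma_i\pm\alpha_i,j_i}^{(k_i,\ell_i)}|$ has at most geometric growth in $\alpha_i$, say $\le A\,B^{\alpha_i}$ for constants $A,B>0$ independent of $\alpha_i$ and $i$ (take the maximum of the finitely many one-variable constants). Then the product is $\le A^d B^{\sum\alpha_i}=A^d B^{|\underline{\alpha}|}$, which is geometric growth in $|\underline{\alpha}|$. For part 3), the rationality is immediate from the one-variable rationality in Lemma \ref{Grow}(2) (or from \eqref{grow1}), since a product of rationals is rational. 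For the denominator bound, let $D_i(n)$ be the least common denominator of $\{r_{\gamma_i\pm m,j_i}^{(k_i,\ell_i)}\mid 0\le m\le n\}$; by Lemma \ref{Grow}(2) each $D_i(n)\le C^n$ for a common constant $C$ and all large $n$. If $|\underline{\alpha}|\le n$ then each $\alpha_i\le n$, so the denominator of $r_{\underline{\gamma}\pm\underline{\alpha},\underline{j}}^{(\underline{k},\underline{\ell})}$ divides $\prod_{i=1}^d D_i(n)$, whence the least common denominator of $\{r_{\underline{\gamma}\pm\underline{\alpha},\underline{j}}^{(\underline{k},\underline{\ell})}\mid|\underline{\alpha}|\le n\}$ divides $\prod_{i=1}^d D_i(n)\le C^{dn}$, which is geometric growth in $n$.

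The only point requiring a little care — and the closest thing to an obstacle — is keeping the passage from the per-coordinate bounds (in the $\alpha_i$) to the aggregate bound (in $|\underline{\alpha}|$) clean: one must note that the relevant one-variable constants can be chosen uniformly over the finitely many coordinates $i=1,\dots,d$ and finitely many index pairs $j_i,\ell_i\le k_i$, and that $\alpha_i\le|\underline{\alpha}|$ for each $i$ together with $\sum_i\alpha_i=|\underline{\alpha}|$ is exactly what converts a product of geometric factors into a single geometric factor. Once that uniformity is recorded, all three parts follow formally from Lemmas \ref{rholem} and \ref{Grow} with no further computation.
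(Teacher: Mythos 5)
Your proposal is correct and follows essentially the same route as the paper: reduce to the one-variable case via the product definitions of $r_{\underline{\gamma}\pm\underline{\alpha},\underline{j}}^{(\underline{k},\underline{\ell})}$ and invoke Lemmas \ref{rholem} and \ref{Grow} coordinatewise. The paper's own proof is just a terser version of yours; your explicit attention to the uniformity of constants over $i$ and over the index pairs $(j_i,\ell_i)$ is a welcome clarification but not a different argument.
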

\begin{proof} 1) By Lemma \ref{rholem}, we have, for $i=1,\ldots,d$,
$ \limsup_{\alpha_i\to
+\infty}\Big|r_{\gamma_i\pm\alpha_i,j_i}^{(k_i,\ell_i)}\Big|_v^{1/|\alpha_i|}\le
1$, and hence $\limsup_{|\underline{\alpha}|\to
+\infty}\Big|r_{\gamma_i\pm\alpha_i,j_i}^{(k_i,\ell_i)}
\Big|_v^{1/|\underline{\alpha}|}\le 1$. The product  over all the
$1\le i\le d$ of these quantities gives the desired inequalities.
The rest of the lemma results from Lemma \ref{Grow} and the
definitions above of the quantities
$r_{\underline{\gamma}\pm\underline{\alpha},\underline{j}}^{(\underline{k},\underline{\ell})}$.
\end{proof}
 The arithmetic properties of
$\underline{\mathcal{L}}$ are based on the following Lemma which
generalizes a well known identity in the $p$-adic analysis.
\begin{lemma}\label{liouv}
Let $v\in \Sigma_{\rm f}$. Assume that $\gamma_1,\ldots,\gamma_d$
are $($non-Liouville$)$ numbers of $K\cap {\bZ}_{p(v)}\setminus
{\bZ}$. Then $$\lim_{|\underline{\alpha}|\to
+\infty}\Big|(\underline{\gamma})_{\underline{\alpha}+\underline{1}}\Big|_v^{1/|\underline{\alpha}|}
=\pi_{v}.$$ In particular, for $d=1$, we have
 $\disp\lim_{\alpha_1\to
+\infty}|(\gamma_1)_{\alpha_1}|_v^{1/\alpha_1} =\pi_{v}$.
\end{lemma}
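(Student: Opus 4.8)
The plan is to reduce the statement to its one–variable case, which is the classical $p$–adic identity the lemma generalizes, and to prove that case by a count of $v$–adic valuations. Concretely, I would first record the one–variable fact: for $\gamma\in K\cap\bZ_{p(v)}\setminus\bZ$ one has $\lim_{n\to+\infty}|(\gamma)_{n+1}|_v^{1/n}=\pi_v$; this is standard, but I give a proof below for completeness.

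For the passage to $d$ variables, write $w=-\log_{p(v)}|\cdot|_v$ and $g_i(m)=w\bigl((\gamma_i)_{m+1}\bigr)=\sum_{\ell=0}^{m}w(\gamma_i+\ell)$, so that $|(\underline{\gamma})_{\underline{\alpha}+\underline{1}}|_v^{1/|\underline{\alpha}|}=p(v)^{-\left(\sum_{i=1}^{d}g_i(\alpha_i)\right)/|\underline{\alpha}|}$. Each $g_i(m)$ is a finite nonnegative real number because $\gamma_i\notin\bZ$ (a fortiori $\gamma_i\notin\bZ_{\le0}$), and the one–variable case gives $g_i(m)/m\to 1/(p(v)-1)$. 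I would then argue as follows: given $\varepsilon>0$, pick $M$ with $|g_i(m)-m/(p(v)-1)|<\varepsilon m$ for all $m\ge M$ and all $i$ (possible since there are finitely many indices), split the sum $\sum_i g_i(\alpha_i)$ according to whether $\alpha_i\ge M$ — where that estimate applies — or $\alpha_i<M$ — only finitely many possible values of $g_i(\alpha_i)$, hence a contribution bounded by a constant $C$ independent of $\underline{\alpha}$ — and use $|\underline{\alpha}|=\sum_i\alpha_i$ to conclude that $\bigl(\sum_i g_i(\alpha_i)\bigr)/|\underline{\alpha}|$ differs from $1/(p(v)-1)$ by at most $\varepsilon+C/|\underline{\alpha}|$. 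Letting $|\underline{\alpha}|\to+\infty$ and then $\varepsilon\to0$ gives the lemma; the displayed case $d=1$ is the one–variable identity itself, up to the harmless substitution $n\mapsto\alpha_1-1$.

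It remains to prove the one–variable identity, and here I would imitate the standard computation behind $|n!|_{p}^{1/n}\to p^{-1/(p-1)}$. Put $p=p(v)$. Since $\gamma$ is a $p$–adic integer, for each $k\ge1$ the set $\{0\le\ell\le n:\ w(\gamma+\ell)\ge k\}$ is $\{0,\dots,n\}$ intersected with a single residue class modulo $p^{k}$, hence has $n/p^{k}+O(1)$ elements; summing over $k$ and using $\sum_{k\ge1}p^{-k}=1/(p-1)$ gives $\sum_{\ell=0}^{n}w(\gamma+\ell)=n/(p-1)+O\bigl(K_0(n)\bigr)$, where $K_0(n)=\max_{0\le\ell\le n}w(\gamma+\ell)$ (one checks $K_0(n)\ge\log_p n-1$, so the error is genuinely of that order). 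The main obstacle, and the only delicate point of the whole argument, is to show $K_0(n)=o(n)$: this is where the hypothesis $\gamma_i\in K$ is used, for $\gamma$ being algebraic is non-Liouville at $v$, i.e.\ $\limsup_{m}|1/(\gamma+m)|_v^{1/m}\le1$, equivalently $w(\gamma+m)=o(m)$ as the integer $m\to+\infty$ (cf. \cite[VI.1.1]{DGS}); since $K_0(n)=\max_{0\le\ell\le n}w(\gamma+\ell)$, this forces $K_0(n)=o(n)$. Therefore $\frac1n\sum_{\ell=0}^{n}w(\gamma+\ell)\to\frac1{p-1}$, that is $|(\gamma)_{n+1}|_v^{1/n}\to p^{-1/(p-1)}=\pi_v$, which completes the plan. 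Everything outside that one estimate — the reduction to one variable and the valuation count itself — is routine.
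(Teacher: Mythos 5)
Your argument is correct, and it reaches the lemma by a genuinely more self-contained route than the paper. The paper's proof is essentially a citation: it imports Clark's explicit two-sided inequalities \cite{Cl} for $|\gamma_i^{-1}(\gamma_i)_{\alpha_i+1}|_v$, whose exponents are $\alpha_i/(p(v)-1)$ up to error terms of the shape $e_i\log(1+\alpha_i)+e_i'$, multiplies them over $i=1,\ldots,d$, and finishes by observing that $\sum_i e_i\log(1+\alpha_i)=O(\log(1+|\underline{\alpha}|))=o(|\underline{\alpha}|)$. You instead prove the one-variable asymptotics from scratch via the layer-counting identity $\sum_{\ell=0}^{n}w(\gamma+\ell)=\sum_{k\ge1}\#\{0\le\ell\le n:\ w(\gamma+\ell)\ge k\}$, each layer being the trace on $\{0,\dots,n\}$ of a single residue class modulo $p^k$ (which is where $\gamma\in\bZ_{p(v)}$ enters), and you correctly isolate the single non-routine input, namely $\max_{0\le\ell\le n}w(\gamma+\ell)=o(n)$, which you derive from the non-Liouville property of algebraic numbers --- the very same fact from \cite[VI.1.1]{DGS} that the paper invokes in the proof of its Lemma \ref{rholem}. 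Your passage to $d$ variables by splitting the indices according to $\alpha_i\ge M$ versus $\alpha_i<M$ is also sound, and it correctly handles the case where some $\alpha_i$ stay bounded while $|\underline{\alpha}|\to+\infty$ (the same role played in the paper by bounding $\sum_i e_i\log(1+\alpha_i)$ uniformly). The trade-off: the paper's route yields explicit non-asymptotic two-sided bounds as a by-product, at the cost of resting entirely on Clark's inequalities; yours needs only the soft non-Liouville estimate and makes transparent exactly where the hypothesis $\gamma_i\in K$ (rather than $\gamma_i\in\bZ_{p(v)}$ arbitrary) is used, but produces only the limit statement, which is all the lemma asserts.
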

\begin{proof}
Combining the inequalities (12) and (14) of \cite{Cl}, we find that,
for any $i=1,\ldots,d$, there exist two real numbers $e_i,e'_i$ such
that
$$|p(v)|_v^{(\alpha_i/(p(v)-1)+e_i\log(1+\alpha_i)+e'_i)}\le
 |\gamma_i^{-1} (\gamma_i)_{(\alpha_i+1)}|_v\le
\frac{|p(v)|_v^{\alpha_i/(p(v)-1)}}{(\alpha_i+1)},\;\; \text{for
any}\;\;\alpha_i\in {\bN}.
$$
Thus,
$$|p(v)|_v^{(|\underline{\alpha}|/(p(v)-1)+\sum_{1\le i\le d}
(e_i\log(1+\alpha_i)+e'_i))}\Big|\prod_{1\le i\le
d}\gamma_i\Big|_v\le
 |(\underline{\gamma})_{\underline{\alpha}+\underline{1}}|_v\le
\prod_{1\le i\le
d}\frac{|\gamma_i|_v}{(\alpha_i+1)}|p(v)|_v^{|\underline{\alpha}|/(p(v)-1)},$$
for any $\underline{\alpha}\in {\bN}^d$. Hence, the lemma results
from the fact:
$$\Big|\sum_{1\le i\le d} e_i\log(1+\alpha_i)\Big|\le
 d\max_{1\le i\le d}(1,|e_i|)\log(1+|\underline{\alpha}|).$$
\end{proof}
Before stating the following proposition, let us recall that the notations $\mathcal R_v^s(f)$, $r_v(f)$ and $\mathcal F$ are defined in sections 1 and 2.
\begin{proposition}\label{pro1} Let $v$ be a finite place of $\Sigma_{\rm f}$
and let  $f=\sum_{\underline{\alpha}\in {\bN}^d}
a_{\underline{\alpha}}\underline{x}^{\underline{\alpha}}\in
K[[\underline{x}]]$ be a power series.
 Assume  $\underline{\gamma}\in
(K\cap\;\bZ_{p(v)}\setminus\bZ)^d$. Then there exist power series
$f_{\underline{\gamma},\underline{k},\underline{j}}\in
{\bC}\otimes_{K}{\mathcal R}_v^{-1}(f)\;\;($resp.
$f_{\underline{\gamma},\underline{k},\underline{j}}^*\in
{\bC}\otimes_{K}{\mathcal R}_v^{1}(f)),\; \underline{j}\le
\underline{k}$, which satisfy the following conditions
\begin{equation}
\label{eqpro1}
\begin{aligned}
 \underline{{\mathcal
L}}\Big(fh_{\underline{\gamma},\underline{k}}\Big)=&\;
\underline{x}^{-\underline{\gamma}-\underline{1}}\underline{\Gamma}(\underline{\gamma})
\disp\sum_{\underline{j}\le
\underline{k}}f_{\underline{\gamma},\underline{k},\underline{j}}\Big(\frac{1}{\underline{x}}\Big)(\log
\underline{x})^{\underline{j}}\\
\underline{{\mathcal
L}}\Big(f\Big(\frac{1}{\underline{x}}\Big)h_{\underline{\gamma},\underline{k}}\Big)=&\;
\underline{x}^{-\underline{\gamma}-\underline{1}}\underline{\Gamma}(\underline{\gamma})
\disp\sum_{\underline{j}\le
\underline{k}}f_{\underline{\gamma},\underline{k},\underline{j}}^*\;(\log
\underline{x})^{\underline{j}}
\end{aligned}
\end{equation}
with
$f_{\underline{\gamma},\underline{k},\underline{k}}f_{\underline{\gamma},\underline{k},\underline{k}}^*\ne
0$ such that
$r_{v}(f_{\underline{\gamma},\underline{k},\underline{k}})=r_{v}(f)\pi_{v}^{-1}\;\;($resp.
$r_{v}(f_{\underline{\gamma},\underline{k},\underline{k}}^*)=r_{v}(f)\pi_{v})$.
Moreover, if $fh_{\underline{\gamma},\underline{k}}\;\;($resp.
$f(\frac{1}{\underline{x}})h_{\underline{\gamma},\underline{k}})$ is
solution of a differential equation $\phi$ of
$K[x_1,\ldots,x_d,\partial_1,\ldots,\partial_d]$,  then
$\mathcal{L}(fh_{\underline{\gamma},\underline{k}})\;\;($resp.
$\mathcal{L}(f(\frac{1}{\underline{x}})h_{\underline{\gamma},\underline{k}}))$
is solution of $\mathcal{F}(\phi)$, where $\mathcal{F}$ is the Fourier-Laplace transform.
\end{proposition}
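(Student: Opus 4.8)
The plan is to establish the three assertions of Proposition~\ref{pro1} in turn: first the explicit shape \eqref{eqpro1} of $\underline{\mathcal L}(fh_{\underline\gamma,\underline k})$ and its analogue, then the radius-of-convergence statements, and finally the intertwining with $\mathcal F$.

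First I would expand $f=\sum_{\underline\alpha}a_{\underline\alpha}\underline x^{\underline\alpha}$ and apply $\underline{\mathcal L}$ term by term using the definition and formula \eqref{tr3}. Collecting the coefficient of each $(\log\underline x)^{\underline j}$ with $\underline j\le\underline k$, one reads off that
$$f_{\underline\gamma,\underline k,\underline j}\Big(\tfrac1{\underline x}\Big)=\sum_{\underline\alpha\in\bN^d}a_{\underline\alpha}\,(\underline\gamma)_{\underline\alpha+\underline 1}\,\underline x^{-\underline\alpha}\,\Big(\sum_{\underline\ell\le\underline k}\rho^{(\underline k)}_{\underline\gamma,\underline\ell}\,r^{(\underline k,\underline\ell)}_{\underline\gamma+\underline\alpha,\underline j}\Big)\big/\rho^{(\underline k)}_{\underline\gamma,\underline k}$$
up to the normalization chosen so that the top coefficient is $(-1)^{|\underline k|}$ times $f_{\underline\gamma,\underline k,\underline k}(1/\underline x)=\sum_\alpha a_{\underline\alpha}(\underline\gamma)_{\underline\alpha+\underline 1}\underline x^{-\underline\alpha}$; the analogous formula for the starred series comes from the $\underline\gamma-\underline\alpha$ row of \eqref{tr3}, and there the monomials are $\underline x^{+\underline\alpha}$, explaining why $f^*_{\underline\gamma,\underline k,\underline j}\in K[[\underline x]]$ while $f_{\underline\gamma,\underline k,\underline j}(1/\underline x)\in K[[1/\underline x]]$. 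That these series lie in $\bC\otimes_K\mathcal R_v^{\mp1}(f)$ is the heart of the matter: by Lemma~\ref{liouv} the factor $(\underline\gamma)_{\underline\alpha+\underline 1}$ contributes exactly $\pi_v^{-|\underline\alpha|}$ to the $v$-adic size (this is where $\underline\gamma\in(K\cap\bZ_{p(v)}\setminus\bZ)^d$ is used), while by Lemma~\ref{Re}(1) the coefficients $r^{(\underline k,\underline\ell)}_{\underline\gamma+\underline\alpha,\underline j}$ grow subexponentially in $|\underline\alpha|$ and $\rho^{(\underline k)}_{\underline\gamma,\underline\ell}$ is a fixed constant; hence $r_v(f_{\underline\gamma,\underline k,\underline j})\ge r_v(f)\pi_v^{-1}$, with equality for $\underline j=\underline k$ because there the inner sum is just $\rho^{(\underline k)}_{\underline\gamma,\underline k}=(-1)^{|\underline k|}$ and the size of $a_{\underline\alpha}(\underline\gamma)_{\underline\alpha+\underline 1}$ is controlled precisely by $r_v(f)\pi_v^{-1}$. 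Non-vanishing of $f_{\underline\gamma,\underline k,\underline k}$ and $f^*_{\underline\gamma,\underline k,\underline k}$ follows since $(\underline\gamma)_{\underline\alpha+\underline 1}\ne 0$ (the $\gamma_i$ are non-integral), so the leading series has the same nonzero support as $f$; this also gives, via Remark~\ref{inj}, that $\underline{\mathcal L}$ is injective here, which makes the coefficient series $f_{\underline\gamma,\underline k,\underline j}$ well defined.

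For the final (differential) assertion I would invoke the commutation formulae \eqref{op2}: $\underline\partial^{\underline\alpha}\underline{\mathcal L}=\underline{\mathcal L}\circ((-1)^{|\underline\alpha|}\underline x^{\underline\alpha}\cdot)$ and $\underline{\mathcal L}\circ\underline\partial^{\underline\alpha}=\underline x^{\underline\alpha}\cdot\underline{\mathcal L}$, valid on $\underline x^{\underline\gamma}K[[\underline x,1/\underline x]][\log\underline x]$ since $\underline\gamma\in(K\setminus\bZ)^d$. These say exactly that $\underline{\mathcal L}$ intertwines the action of $x_i$ with that of $-\partial_i$ and the action of $\partial_i$ with that of $x_i$, i.e. conjugation by $\underline{\mathcal L}$ sends the operator $x_i$ to $-\partial_i$ and $\partial_i$ to $x_i$ — which for $\underline\tau=\underline 1$ is precisely the defining relations of $\mathcal F$. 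Thus for any monomial $x^{\underline\beta}\underline\partial^{\underline\mu}$ one gets $\underline{\mathcal L}(x^{\underline\beta}\underline\partial^{\underline\mu}\,g)=\mathcal F(x^{\underline\beta}\underline\partial^{\underline\mu})\,\underline{\mathcal L}(g)$ (the sign $(-1)^{|\underline\beta|}$ from moving each $x_i$ across matches the $-\partial_i$), and by $K$-linearity the same holds for $\phi=\sum c_{\underline\beta,\underline\mu}x^{\underline\beta}\underline\partial^{\underline\mu}$; applying this with $g=fh_{\underline\gamma,\underline k}$ (resp. $g=f(1/\underline x)h_{\underline\gamma,\underline k}$) and using $\phi(g)=0$ gives $\mathcal F(\phi)\big(\underline{\mathcal L}(g)\big)=\underline{\mathcal L}(\phi(g))=0$, as claimed. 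One should note the mild operator-ordering point that the $\partial_i$, $x_i$ do not commute, but the Weyl-algebra relation $[\partial_i,x_i]=1$ is preserved by the substitution $x_i\mapsto-\partial_i,\ \partial_i\mapsto x_i$ (indeed $[x_i,-\partial_i]=1$), so the map is a well-defined automorphism and the computation is consistent regardless of the chosen normal form of $\phi$.

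The main obstacle is the equality $r_v(f_{\underline\gamma,\underline k,\underline k})=r_v(f)\pi_v^{-1}$ — the inequality $\ge$ is the easy direction (Lemmas~\ref{liouv} and \ref{Re}), but the reverse requires knowing that the product $|a_{\underline\alpha}(\underline\gamma)_{\underline\alpha+\underline 1}|_v^{1/|\underline\alpha|}$ has $\limsup$ exactly $r_v(f)^{-1}\pi_v$, i.e. that the sharp two-sided bounds of Lemma~\ref{liouv} (derived from \cite{Cl}, using non-Liouvilleness of the $\gamma_i$) force no cancellation in the $\limsup$; one has to be a little careful that the subexponential factors $e_i\log(1+\alpha_i)$ appearing there truly do not affect the $1/|\underline\alpha|$-th root limit, which is exactly the content of the last display in the proof of Lemma~\ref{liouv}. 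A secondary subtlety is verifying that \eqref{op2} genuinely extends from single terms $h_{\underline\gamma,\underline k}$ to the infinite sums $fh_{\underline\gamma,\underline k}$, i.e. that $\underline{\mathcal L}$ commutes with the relevant infinite $K$-linear combinations; this is formal because $\underline{\mathcal L}$ is defined coefficientwise in the $x$-adic/$1/x$-adic topology, but it deserves a one-line remark.
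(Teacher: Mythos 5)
Your proposal is correct and follows essentially the same route as the paper: the coefficient series you read off from \eqref{tr3} coincide (up to the harmless normalization by $\rho^{(\underline{k})}_{\underline{\gamma},\underline{k}}=(-1)^{|\underline{k}|}$) with the ones the paper defines in \eqref{klj}, the radius statements are derived exactly as there from Lemmas \ref{Re} and \ref{liouv}, and the intertwining with $\mathcal{F}$ from \eqref{op2}. One cosmetic slip: the $v$-adic size of $(\underline{\gamma})_{\underline{\alpha}+\underline{1}}$ is asymptotically $\pi_{v}^{|\underline{\alpha}|}$ rather than $\pi_{v}^{-|\underline{\alpha}|}$, but that is precisely what produces the factor $\pi_{v}^{-1}$ in the radius, so your conclusion stands.
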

\begin{proof} Set, for all $|\underline{j}|\le |\underline{k}|$,
\begin{equation}
\label{klj}
\begin{aligned}
f_{\underline{\gamma},\underline{k},\underline{k}}=&\;
\sum_{\underline{\alpha}\in
\bN^d}(-1)^{|\underline{k}|}\;a_{\underline{\alpha}}\;(\underline{\gamma})_{\underline{\alpha}+\underline{1}}
\;\underline{x}^{\underline{\alpha}}\\
f_{\underline{\gamma},\underline{k},\underline{j}}=&\;\sum_{\underline{\ell}\le
\underline{k}}\rho^{(\underline{k})}_{\underline{\gamma},\underline{\ell}}\;\sum_{\underline{\alpha}\in
\bN^d}(-1)^{|\underline{k}|}\;a_{\underline{\alpha}}\;(\underline{\gamma})_{\underline{\alpha}+\underline{1}}
\;r_{\underline{\gamma}+\underline{\alpha},\underline{j}}^{(\underline{k},\underline{\ell})}
\;\underline{x}^{\underline{\alpha}} \\
f_{\underline{\gamma},\underline{k},\underline{k}}^*=&\;\sum_{\underline{\alpha}\in
\bN^d}\frac{(-1)^{|\underline{\alpha}|+|\underline{k}|}\;(\underline{\gamma})_{\underline{0}}}
{(-\underline{\gamma})_{\underline{\alpha}}}\;a_{\underline{\alpha}}\;\underline{x}^{\underline{\alpha}} \\
f_{\underline{\gamma},\underline{k},\underline{j}}^*=&\;\sum_{\underline{\ell}\le
\underline{k}}\rho^{(\underline{k})}_{\underline{\gamma},\underline{\ell}}\;
\sum_{\underline{\alpha}\in \bN^d}
\frac{(-1)^{|\underline{\alpha}|+|\underline{k}|}\;(\underline{\gamma})_{\underline{0}}}
{(-\underline{\gamma})_{\underline{\alpha}}}\;a_{\underline{\alpha}}\;
r_{\underline{\gamma}-\underline{\alpha},\underline{j}}^{(\underline{k},\underline{\ell})}
\;\underline{x}^{\underline{\alpha}}.
\end{aligned}
\end{equation}
By \eqref{tr3}, and in virtue of Lemmas \ref{Re} and \ref{liouv},
one sees that these power series satisfy the desired conditions. The
last statement results from \eqref{op2}.
\end{proof}
\subsection{3.4 Laplace transform and arithmetic Gevrey series}
 In this paragraph, we shall explain the action of the standard Laplace transform in several
 variables on the arithmetic Gevrey series. Let us first give the definition of
 these power series:

Fix an embedding $\overline{\bQ}\hookrightarrow \bC$, and let
$(a_{\underline{\alpha}})_{\underline{\alpha}\in \bN^d}$ be a family  of
algebraic numbers of $\overline{\bQ}$. Consider the following
conditions:
\begin{itemize}
 \item[($A_1$):]   for
all $\underline{\alpha}=(\alpha_1,\ldots,\alpha_d)\in {\bN}^d$,
there exists a constant $C_1\in {\bR}_{>0}$ such that
 $a_{\underline{\alpha}}$ and its conjugates
over ${\bQ}$ do not exceed $C_1^{|\underline{\alpha}|}$ in absolute
value;
\item[($A_2$):] there exists a constant $C_2\in {\bR}_{>0}$ such that the
common denominator in ${\bN}$ of $\{a_{\underline{\alpha}},
|\underline{\alpha}|\le n\}$ does not exceed $C_2^{n+1}$.
\end{itemize}
\textbf{Definition.} An \emph{arithmetic Gevrey series
 of order} $s\in {\bQ}$ is an element  $f=\sum_{\underline{\alpha}\in{\bN}^d}
 a_{\underline{\alpha}}\underline{x}^{\underline{\alpha}}$ of
 $K[[\underline{x}]]$ such that
 the sequence $(a_{\underline{\alpha}}/(\underline{\alpha}!)^{s})_{\underline{\alpha}\in \bN^d}$
satisfies the conditions $(A_1)$ and $(A_2)$.

The power series $f=\sum_{\underline{\alpha}\in{\bN}^d}
 a_{\underline{\alpha}}\underline{x}^{\underline{\alpha}}$ is called
 a $G$\emph{-function} (resp. $E$-\emph{function}) if it is an arithmetic Gevrey series
 of order $0$ (resp. $-1$) and satisfies the following holonomicity condition:
\begin{itemize}
 \item[($H$):] $f$ is called rationally holonomic  over $K(\underline{x})$, if the
$K(\underline{x})/K$-differential module generated by
$(\underline{\partial}^{\underline{\alpha}}(f))_{\underline{\alpha}\in{\bN}^d}$ is a
$K(\underline{x})$-vector space of finite dimension.
\end{itemize}
The condition ($H$)  is equivalent to say that $f$ is a solution of a linear partial differential equation with coefficients in $K(\underline{x})$.\\ \ \\
\textbf{Notation.} Let $s$ be a  rational number. We denote
$K\{\underline{x}\}_{s}$ the set of the power series
$f=\sum_{\underline{\alpha}\in{\bN}^d}a_{\underline{\alpha}}\underline{x}^{\underline{\alpha}}\in
K[[\underline{x}]]$ such that  the sequence
$(a_{\underline{\alpha}}/(\underline{\alpha}!)^{s})_{\underline{\alpha}\in \bN^d}$
 satisfies $(A_1)$ and ($A_2$).
$$K\Big\{\frac{1}{\underline{x}}\Big\}_{s}=\Big\{\sum_{\underline{\alpha}\in{\bN}^d}
 a_{\underline{\alpha}}\underline{x}^{-\underline{\alpha}}\;|\; \sum_{\underline{\alpha}\in{\bN}^d}
 a_{\underline{\alpha}}\underline{x}^{\underline{\alpha}}\in K\{\underline{x}\}_{s}\Big\},
 $$
$$NGA\{\underline{x}\}_{s}^K= \Big\{\sum_{\text{ finite
sum}}f_{{\underline{\gamma}},\underline{k}}h_{{\underline{\gamma}},\underline{k}}\;\Big|\;
f_{{\underline{\gamma}},\underline{k}}\in
K\{\underline{x}\}_{s},\;\;(\underline{\gamma},\underline{k})\in
(\bQ\setminus\bZ)^d\times\bN^d\Big\},$$
$$NGA\Big\{\frac{1}{\underline{x}}\Big\}_{s}^K=
\Big\{\sum_{\text{ finite
sum}}f_{{\underline{\gamma}},\underline{k}}h_{{\underline{\gamma}},\underline{k}}\;\Big|\;
f_{{\underline{\gamma}},\underline{k}}\in
K\Big\{\frac{1}{\underline{x}}\Big\}_{s},\;\;(\underline{\gamma},\underline{k})\in
(\bQ\setminus\bZ)^d\times\bN^d\Big\}.$$  Following \cite[section
1]{An}, we see that these two latter sets are differential
$K[\underline{x}]$-algebras. The elements of
$NGA\{\underline{x}\}_{s}^K$ will be called \emph{Nilson-Gevrey
power series of order $s$}. The following result expresses how the
Laplace transform $\mathcal{L}$ acts on these differential
$K[\underline{x}]$-algebras.
\begin{proposition}\label{gev}
The Laplace transform $\underline{\mathcal{L}}$ induces the
following injective $K$-linear maps
$$NGA\{\underline{x}\}_{s}^K \hookrightarrow \bC\otimes_K NGA\Big\{\frac{1}{\underline{x}}\Big\}_{s+1}^K,$$
$$NGA\Big\{\frac{1}{\underline{x}}\Big\}_{s}^K \hookrightarrow \bC\otimes_K NGA\{\underline{x}\}_{s-1}^K.$$
\end{proposition}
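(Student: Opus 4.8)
The proposition asserts two things about $\underline{\mathcal{L}}$ acting on Nilson–Gevrey spaces: (i) it lands inside the claimed target space (a statement about growth of coefficients, i.e.\ that conditions $(A_1)$ and $(A_2)$ are inherited after twisting the Gevrey order by $\pm 1$), and (ii) it is injective as a $K$-linear map. I would treat these in turn. For (i), the decisive formulas are \eqref{tr3} and the explicit expansions \eqref{klj} from the proof of Proposition \ref{pro1}, which express $\underline{\mathcal{L}}(fh_{\underline{\gamma},\underline{k}})$ as $\underline{x}^{-\underline{\gamma}-\underline{1}}\underline{\Gamma}(\underline{\gamma})\sum_{\underline{j}\le\underline{k}} f_{\underline{\gamma},\underline{k},\underline{j}}(\tfrac{1}{\underline{x}})(\log\underline{x})^{\underline{j}}$ with $f_{\underline{\gamma},\underline{k},\underline{j}}$ built from the coefficients $a_{\underline{\alpha}}$, the products $(\underline{\gamma})_{\underline{\alpha}+\underline{1}}$, the constants $\rho^{(\underline{k})}_{\underline{\gamma},\underline{\ell}}$, and the rational numbers $r^{(\underline{k},\underline{\ell})}_{\underline{\gamma}+\underline{\alpha},\underline{j}}$. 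Since $-\underline{\gamma}-\underline{1}\in(\bQ\setminus\bZ)^d$ when $\underline{\gamma}\in(\bQ\setminus\bZ)^d$, and the twisted factor $\underline{x}^{-\underline{\gamma}-\underline{1}}\underline{\Gamma}(\underline{\gamma})(\log\underline{x})^{\underline{j}}$ has exactly the shape $h_{-\underline{\gamma}-\underline{1},\underline{j}}$ up to the transcendental constant $\underline{\Gamma}(\underline{\gamma})\in\bC$, the whole output is visibly a Nilson–Gevrey series over $\bC$ in $1/\underline{x}$ of \emph{some} order — the content is that the order goes from $s$ to $s+1$.

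**Carrying out (i).** Write $a_{\underline{\alpha}} = b_{\underline{\alpha}}\,(\underline{\alpha}!)^{s}$ where $(b_{\underline{\alpha}})$ satisfies $(A_1)$ and $(A_2)$; I must show the coefficients of each $f_{\underline{\gamma},\underline{k},\underline{j}}$, divided by $(\underline{\alpha}!)^{s+1}$, satisfy $(A_1)$ and $(A_2)$. The new coefficient is (up to sign and the constant $\rho^{(\underline{k})}_{\underline{\gamma},\underline{\ell}}$) of the form $b_{\underline{\alpha}}(\underline{\alpha}!)^{s}(\underline{\gamma})_{\underline{\alpha}+\underline{1}}r^{(\underline{k},\underline{\ell})}_{\underline{\gamma}+\underline{\alpha},\underline{j}}$, so divided by $(\underline{\alpha}!)^{s+1}$ it is $b_{\underline{\alpha}}\cdot\dfrac{(\underline{\gamma})_{\underline{\alpha}+\underline{1}}}{\underline{\alpha}!}\cdot r^{(\underline{k},\underline{\ell})}_{\underline{\gamma}+\underline{\alpha},\underline{j}}$. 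The ratio $(\underline{\gamma})_{\underline{\alpha}+\underline{1}}/\underline{\alpha}!$ is a product over $i$ of binomial-type quantities $\binom{\gamma_i+\alpha_i}{\alpha_i}$ up to the factor $\gamma_i$; for $\gamma_i\in\bQ$ these are rational with denominators dividing a power of the denominator of $\gamma_i$ times $\mathrm{lcm}\{1,\ldots,\alpha_i\}$, and of archimedean size at most geometric in $\alpha_i$ — so they and all their conjugates obey $(A_1)$ and $(A_2)$ with the prime number theorem bound $\mathrm{lcm}\{1,\ldots,n\}\le C^n$ exactly as in Lemma \ref{Grow}(2). The factor $r^{(\underline{k},\underline{\ell})}_{\underline{\gamma}+\underline{\alpha},\underline{j}}$ is handled by Lemma \ref{Re}: part (2) gives at most geometric archimedean growth in $|\underline{\alpha}|$, part (3) gives rationality and at most geometric growth of the least common denominator of the family $\{r^{(\underline{k},\underline{\ell})}_{\underline{\gamma}+\underline{\alpha},\underline{j}}:|\underline{\alpha}|\le n\}$ (and being rational it equals all its conjugates). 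Since $(A_1)$ and $(A_2)$ are each stable under termwise product of sequences (geometric bounds multiply, and the common denominator of a product divides the product of common denominators), the product $b_{\underline{\alpha}}\cdot[(\underline{\gamma})_{\underline{\alpha}+\underline{1}}/\underline{\alpha}!]\cdot r^{(\underline{k},\underline{\ell})}_{\underline{\gamma}+\underline{\alpha},\underline{j}}$ satisfies both, hence $f_{\underline{\gamma},\underline{k},\underline{j}}\in\bC\otimes_K K\{\underline{x}\}_{s+1}$, i.e.\ $f_{\underline{\gamma},\underline{k},\underline{j}}(\tfrac1{\underline{x}})\in\bC\otimes_K K\{\tfrac1{\underline{x}}\}_{s+1}$; summing the finitely many $h_{\underline{\gamma},\underline{k}}$-components of a general element of $NGA\{\underline{x}\}_s^K$ and using that $-\underline{\gamma}-\underline{1}\in(\bQ\setminus\bZ)^d$ places the image in $\bC\otimes_K NGA\{\tfrac1{\underline{x}}\}_{s+1}^K$. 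The second map is identical, reading off $f^*_{\underline{\gamma},\underline{k},\underline{j}}$ from \eqref{klj}: now the relevant ratio is $(\underline{\gamma})_{\underline{0}}/(-\underline{\gamma})_{\underline{\alpha}}$ and the Gevrey order drops by $1$ because one divides rather than multiplies by a Pochhammer symbol — here $\underline{\alpha}!/(-\underline{\gamma})_{\underline{\alpha}}$ plays the analogous role and the same archimedean/denominator estimates (Lemma \ref{Grow}(2), Lemma \ref{Re}) apply, noting that the denominators of $1/(-\gamma_i)_{\alpha_i}$ are again controlled by powers of the denominator of $\gamma_i$.

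**Injectivity, and the main obstacle.** Injectivity is not merely the $x$-adic injectivity of Remark \ref{inj}, because here a \emph{finite} sum $\sum f_{\underline{\gamma},\underline{k}}h_{\underline{\gamma},\underline{k}}$ over several pairs $(\underline{\gamma},\underline{k})$ is allowed, and one must know these summands do not interfere. The argument is: $\underline{\mathcal{L}}$ sends $h_{\underline{\gamma},\underline{k}}$-type terms to $h_{-\underline{\gamma}-\underline{1},\underline{j}}$-type terms with $\underline{j}\le\underline{k}$, and the top logarithmic part ($\underline{j}=\underline{k}$) has coefficient $(-1)^{|\underline{k}|}(\underline{\gamma})_{\underline{\alpha}+\underline{1}}\underline{\Gamma}(\underline{\gamma})$, which is nonzero (exactly as in Remark \ref{grow}(ii) and Proposition \ref{pro1}'s $f_{\underline{\gamma},\underline{k},\underline{k}}\ne0$). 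Distinct input exponents $\underline{\gamma}$ give distinct output exponents $-\underline{\gamma}-\underline{1}$, and a Nilson–Gevrey series decomposes uniquely according to its monomial-in-$\underline{x}$ exponents mod $\bZ^d$ and its logarithmic multidegree (linear independence of the $h_{\underline{\gamma},\underline{k}}$ over the ring of Laurent-type series — this is where one invokes that $\underline{\gamma}\notin\bZ^d$). So if $\underline{\mathcal{L}}(\sum f_{\underline{\gamma},\underline{k}}h_{\underline{\gamma},\underline{k}})=0$, comparing the $(-\underline{\gamma}-\underline{1},\underline{k})$-component for a fixed $\underline{\gamma}$ with $\underline{k}$ maximal forces $f_{\underline{\gamma},\underline{k},\underline{k}}=0$, hence $f_{\underline{\gamma},\underline{k}}=0$ by the nonvanishing of $(\underline{\gamma})_{\underline{\alpha}+\underline{1}}$; then descend on $\underline{k}$. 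The main obstacle I anticipate is purely bookkeeping: making the "product of sequences satisfying $(A_1)$, $(A_2)$ again satisfies $(A_1)$, $(A_2)$" step genuinely rigorous when one of the factors is a ratio of Pochhammer symbols whose $p$-adic denominators must be bounded uniformly — one needs the explicit structure in \eqref{grow1} (that $r^{(\underline{k},\underline{\ell})}_{\underline{\gamma}\pm\underline{\alpha},\underline{j}}$ lies in the $\bZ$-span of products $\prod 1/(\gamma_i\pm m)$ with $m\le|\underline{\alpha}|$ and at most $k$ factors) together with the prime number theorem, precisely the combination already assembled in Lemmas \ref{Grow} and \ref{Re}; once those are in hand the verification is mechanical.
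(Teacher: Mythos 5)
Your proposal is correct and follows essentially the same route as the paper: both reduce to the explicit expansions \eqref{klj} from Proposition \ref{pro1} and control the Pochhammer ratios and the coefficients $r^{(\underline{k},\underline{\ell})}_{\underline{\gamma}\pm\underline{\alpha},\underline{j}}$ via Lemmas \ref{Grow} and \ref{Re}, the only difference being that the paper invokes Siegel's Lemma \ref{Si} for the least common denominators of the ratios $(\underline{\gamma})_{\underline{\alpha}+\underline{1}}/\underline{\alpha}!$ and $\underline{\alpha}!/(-\underline{\gamma})_{\underline{\alpha}}$ where you rederive the bound from the prime number theorem. Your more detailed injectivity argument (separating exponents modulo $\bZ^d$ and filtering by logarithmic degree) merely fills in what the paper disposes of with a bare citation of Remark \ref{inj}.
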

\begin{proof} Let  $f=\sum_{\underline{\alpha}\in {\bN}^d}
a_{\underline{\alpha}}\underline{x}^{\underline{\alpha}}\in
K\{\underline{x}\}_{s}$ and $(\underline{\gamma},\underline{k})\in
(\bQ\setminus\bZ)^d\times\bN^d$. By Proposition \ref{pro1}, we have
\begin{equation}
\label{eqpro1}
\begin{aligned}
 \underline{{\mathcal
L}}\Big(fh_{\underline{\gamma},\underline{k}}\Big)=&\;
\underline{x}^{-\underline{\gamma}-\underline{1}}\underline{\Gamma}(\underline{\gamma})
\disp\sum_{\underline{j}\le
\underline{k}}f_{\underline{\gamma},\underline{k},\underline{j}}\Big(\frac{1}{\underline{x}}\Big)(\log
\underline{x})^{\underline{j}}\\
\underline{{\mathcal
L}}\Big(f\Big(\frac{1}{\underline{x}}\Big)h_{\underline{\gamma},\underline{k}}\Big)=&\;
\underline{x}^{-\underline{\gamma}-\underline{1}}\underline{\Gamma}(\underline{\gamma})
\disp\sum_{\underline{j}\le
\underline{k}}f_{\underline{\gamma},\underline{k},\underline{j}}^*\;(\log
\underline{x})^{\underline{j}}
\end{aligned}
\end{equation}
where the power series
$f_{\underline{\gamma},\underline{k},\underline{j}}$ and
$f_{\underline{\gamma},\underline{k},\underline{j}}^*$ ($
\underline{j}\le \underline{k}$) are defined by \eqref{klj}. Since
the quantities $(\underline{\gamma})_{\underline{\alpha}}$ and
$r_{\underline{\gamma}\;\pm\;\underline{\alpha},\underline{j}}^{(\underline{k},\underline{\ell})}$
are rational numbers for all $\underline{\alpha}\in \bN^d$, and all
$\underline{j}, \underline{\ell}\le \underline{k}$, then we deduce
from  2) and 3) of Lemma \ref{Re} and Lemma \ref{Si} bellow, that
all the power series
$f_{\underline{\gamma},\underline{k},\underline{j}}$ (resp.
$f_{\underline{\gamma},\underline{k},\underline{j}}^*$) are in
$\bC\otimes_K NGA\Big\{\frac{1}{\underline{x}}\Big\}_{s+1}^K$ (resp.
$\bC\otimes_K NGA\{\underline{x}\}_{s-1}^K$). This implies, by the
$K$-linearity of $\underline{\mathcal{L}}$,
$\underline{\mathcal{L}}(NGA\{\underline{x}\}_{s}^K) \subseteq
\bC\otimes_K NGA\Big\{\frac{1}{\underline{x}}\Big\}_{s+1}^K$ and
$\underline{\mathcal{L}}\Big(NGA\Big\{\frac{1}{\underline{x}}\Big\}_{s}^K\Big)
\subseteq \bC\otimes_K NGA\{\underline{x}\}_{s-1}^K$. The
injectivity follows from Remark \ref{inj}.
\end{proof}
\begin{lemma}\cite{Si}
\label{Si} Let $a_1,\ldots,a_d,b_1,\ldots,b_{d'}$ be rational
numbers in $\bQ\setminus\bZ_{\le 0}$. Then there exists a positive
constant $C>0$ such that for any positive integer $n$,
$${\rm lcd}\Big(\frac{\prod_{i}(a_i)_0}{\prod_{j}(b_j)_0},\ldots,\frac{\prod_{i}(a_i)_n}{\prod_{j}(b_j)_n}\Big)
< n!^{d'-d}C^n,$$ where `` {\rm lcd}'' denotes the least common
denominator.
\end{lemma}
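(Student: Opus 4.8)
This is a classical estimate; the route I would take is a prime‑by‑prime analysis combined with standard prime‑counting. First I would write $a_i=p_i/q_i$ and $b_j=s_j/t_j$ in lowest terms ($q_i,t_j\in\bZ_{>0}$) and, using $(\gamma)_m=\gamma(\gamma+1)\cdots(\gamma+m-1)$ for $m\ge1$, factor off the transcendental‑free part: $R_m:=\prod_i(a_i)_m/\prod_j(b_j)_m=U^m\widetilde{R}_m$ with $U=\prod_jt_j\big/\prod_iq_i$ fixed and
$$\widetilde{R}_m=\frac{\prod_i\prod_{k=0}^{m-1}(p_i+kq_i)}{\prod_j\prod_{k=0}^{m-1}(s_j+kt_j)}=:\frac{\mathcal{N}_m}{\mathcal{M}_m}$$
a quotient of \emph{nonzero integers} (no factor vanishes, since $a_i,b_j\notin\bZ_{\le0}$). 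Since $\mathrm{den}(U^m)$ divides $(\prod_iq_i)^m$ and the $m=0$ term contributes only a fixed denominator, the problem reduces to bounding $\widetilde{L}_n:=\mathrm{lcm}_{1\le m\le n}\mathrm{den}(\widetilde{R}_m)$, and I would do this prime by prime: $v_\ell(\widetilde{L}_n)=\max_{1\le m\le n}\bigl(v_\ell(\mathcal{M}_m)-v_\ell(\mathcal{N}_m)\bigr)^{+}$, both valuations being $\ge0$.

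I may assume $d'\ge d$ (if $d'<d$ the factor $n!^{d'-d}$ is to be read as $1$, and the argument below still gives an exponential bound). Fixing a prime $\ell$, I would write $v_\ell(\mathcal{M}_m)=\sum_j\sum_{r\ge1}\#\{0\le k<m:\ell^r\mid s_j+kt_j\}$ and the analogue for $\mathcal{N}_m$, using that the level‑$r$ count equals $\lfloor m/\ell^r\rfloor$ or $\lceil m/\ell^r\rceil$ when $\ell\nmid t_j$ (resp. $\ell\nmid q_i$), is $0$ when $\ell\mid t_j$ (then $\ell\nmid s_j+kt_j$ since $\gcd(s_j,t_j)=1$), and is $0$ when $\ell^r>C_0m$ (since $s_j+kt_j$ is a nonzero integer of size $\le C_0m$, with $C_0$ depending only on the parameters). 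For a \emph{good} prime $\ell$ — one not dividing $\prod_iq_i\prod_jt_j$, hence all but finitely many — subtracting level by level yields
$$v_\ell(\mathrm{den}(\widetilde{R}_m))\ \le\ (d'-d)\,\frac{m}{\ell-1}\ +\ (d+d')\,\#\{\,r\ge1:\ \ell^r\le C_0m\,\},$$
which increases in $m$ and vanishes once $\ell>C_0n$. For the finitely many \emph{bad} primes the same argument gives the cruder $v_\ell(\mathrm{den}(\widetilde{R}_m))\le\max(d,d')\frac{m}{\ell-1}+(d+d')\log_\ell(C_0m)=O(m)$, so the bad primes together contribute a factor $\le C_1^{n}$.

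Summing the good‑prime bound,
$$\log\!\!\prod_{\ell\ \mathrm{good}}\!\!\ell^{v_\ell(\widetilde{L}_n)}\ \le\ (d'-d)\,n\!\!\sum_{\ell\le C_0n}\!\frac{\log\ell}{\ell-1}\ +\ (d+d')\!\!\sum_{\ell^{r}\le C_0n}\!\!\log\ell,$$
and by Mertens' estimate $\sum_{\ell\le x}\frac{\log\ell}{\ell-1}=\log x+O(1)$ and Chebyshev's $\sum_{\ell^r\le x}\log\ell=O(x)$ this is $(d'-d)\,n\log n+O(n)=(d'-d)\log n!+O(n)$ by Stirling. Hence $\widetilde{L}_n<n!^{d'-d}C_2^{n}$; combined with the bad‑prime factor and the reduction of the first step, $\mathrm{lcd}(R_0,\dots,R_n)<n!^{d'-d}C^{n}$, as claimed.

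The one step I expect to be the real obstacle is the good‑prime estimate of the second paragraph: one must see that the two factorially large valuations $v_\ell(\mathcal{M}_m)$ and $v_\ell(\mathcal{N}_m)$ cancel level by level in $r$ up to an error of order $m/\ell^r$, so that only the \emph{net} count $d'-d$ of denominator parameters can produce growth faster than geometric, and one must control the number of contributing exponents $r$ by $\#\{r:\ell^r\le C_0m\}$ — which is exactly where the hypothesis $b_j\notin\bZ_{\le0}$ (non‑vanishing of the integers $s_j+kt_j$, equivalently of $(b_j)_m$) enters. The rest — the reduction to a quotient of integers, the bad primes, the $m=0$ term, and the passage from the local valuations to the global bound — is bookkeeping together with the classical estimates of Mertens, Chebyshev and Stirling.
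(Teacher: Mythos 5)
The paper does not prove Lemma \ref{Si}: it is quoted without proof as a classical result of Siegel \cite{Si}, so there is no internal argument to compare yours against. Judged on its own terms, your proof is correct and complete, and it is in substance the standard proof of this estimate. The reduction $R_m=U^m\widetilde R_m$ to a quotient of nonzero integers, the identity $v_\ell(\widetilde L_n)=\max_{m\le n}\bigl(v_\ell(\mathcal{M}_m)-v_\ell(\mathcal{N}_m)\bigr)^{+}$, and the level-by-level count of $k<m$ with $\ell^r\mid s_j+kt_j$ (a single residue class mod $\ell^r$ when $\ell\nmid t_j$, empty when $\ell\mid t_j$ by coprimality, and empty when $\ell^r>C_0m$ by nonvanishing of $s_j+kt_j$ --- which is indeed exactly where $b_j\notin\bZ_{\le 0}$ enters) are all handled correctly; the cancellation of the main terms $m/\ell^r$ leaves the net coefficient $d'-d$ plus $(d+d')$ bounded errors per contributing level, and the Mertens--Chebyshev--Stirling assembly then yields $(d'-d)\log n!+O(n)$, with the finitely many primes dividing $\prod_i q_i\prod_j t_j$ and the $m=0$ term absorbed into $C^n$. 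One remark on the statement rather than on your proof: when $d'<d$ the inequality as literally written fails for large $n$ (the right-hand side tends to $0$ while the lcd is at least $1$), so your convention of reading the exponent as $\max(d'-d,0)$ is the only tenable one, and your argument does deliver the purely exponential bound in that case since the main terms are then nonpositive.
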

\textbf{Question 3.10.} Does the Laplace transform preserve the
holonomicity condition $(H)$ above?
\section{Formal Laplace transform}
\subsection{4.1. Review of the one-variable case}
In this paragraph, we recall the formal Laplace transform in one
variable as it was defined in \cite[\S5]{MR}. This formal transformation
allows to avoid the transcendental coefficients as those arising in
the Laplace transform seen in the previous section. Moreover, this
transformation has properties of commuting to derivation and
therefore sends a basis of logarithmic solutions at $0$ (resp. at
infinity) of a differential equation $\phi\in K[x,d/dx]$ to
logarithmic solutions at infinity (resp. at $0$) of
$\mathcal{F}_{\tau}(\phi)$ (for any $\tau\in K\setminus\{0\}$).

Let $\nu$ be a positive integer, $\tau$ an element of
$K\setminus\{0\}$ and $\Lambda$ an $\nu\times \nu$ matrix with
entries in $K$ such that all its eigenvalues belong to $K\setminus
\bZ$. Then the matrix
$$x^\Lambda=\exp(\Lambda\log x)=\sum_{n\ge 0}\frac{\Lambda^n(\log x)^n}{n!}\in{\GL}_\nu(K[[\log x]])$$
which satisfies
\begin{equation*}
\frac{d}{dx}(x^\Lambda) = \Lambda x^{-1}x^{\Lambda} = \Lambda
x^{\Lambda-{\bI}_\nu}.
\end{equation*}
For any integer $\mu\ge 1$ and any  $\mu\times \nu$ matrix $Y(x) =
\sum_{n=-\infty}^\infty Y_i x^n$ with entries in $K[[x,1/x]]$, the
\emph{Laplace  transform  ${\mathcal
L}^{\tau}_\Lambda(Y(x)x^\Lambda)$ of $f:=Y(x)x^\Lambda$}, with
respect to $\Lambda$  and to ${\tau}$,  is defined by
\begin{equation}
\label{trf} {\mathcal L}^{\tau}_\Lambda(Y(x)x^\Lambda) = {\mathcal
L}^{\tau}_\Lambda\Big(\sum_{n=-\infty}^\infty Y_n
x^{\Lambda+n{\bI}_\nu}\Big) := \sum_{n=-\infty}^\infty Y_n
C_{\Lambda,\tau}(n) x^{-\Lambda-(n+1){\bI}_\nu}
\end{equation}
where $C_{\Lambda,\tau}\colon\bZ\to\GL_n(K)$ is defined by the
following conditions
\begin{equation*}
C_{\Lambda,\tau}(n) =
  \begin{cases}
  \tau^{-n} (\Lambda+n{\bI}_\nu)(\Lambda+(n-1){\bI}_\nu)\cdots (\Lambda+{\bI}_\nu) &\mbox{si
$n\ge 1$,}\\
   {\bI}_\nu &\mbox{si $n=0$,}\\
   \tau^{-n} \Lambda^{-1}(\Lambda-{\bI}_\nu)^{-1}\cdots(\Lambda+(n+1){\bI}_\nu)^{-1}
&\mbox{si
   $n\le -1$.}
  \end{cases}
\end{equation*}
Notice that  $C_{\Lambda,\tau}$, with these properties, satisfies
\begin{equation}\label{invC}
 C_{\Lambda,\tau}(n) C_{-\Lambda,\tau}(-n-1) =
(-1)^{n+1}\tau\Lambda^{-1}.
\end{equation}
The transformation ${\mathcal L}^{\tau}_\Lambda$ has the following
formal properties \cite[5.1.1]{MR}:
\begin{equation}
\label{Ld=xL} {\mathcal L}^{\tau}_{-\Lambda}\big({\mathcal
L}^{\tau}_\Lambda(f)\big) = - \tau Y(-x)\Lambda^{-1} x^\Lambda,
\quad {\mathcal L}^{\tau}_\Lambda\Big(\frac{df}{dx}\Big) = {\tau}x
{\mathcal L}^{\tau}_\Lambda(f)\;\; \text{and}\;\; {\mathcal
L}^{\tau}_\Lambda(xf) = -\frac{1}{{\tau}}\frac{d}{dx} {\mathcal
L}^{\tau}_\Lambda(f).
\end{equation}
Moreover, if $\mu=1$ and if the entries of $f$  are solutions of a
differential equation $\phi\in K[x,d/dx]$, then those of ${\mathcal
L}^{\tau}_\Lambda(f)$ are solutions of ${\mathcal F}_{\tau}(\phi)$.
\subsection{4.2 Arithmetic estimates}
For the remainder of this section,  $v$ denotes a fixed finite place
of $\Sigma_{\rm f}$. For any matrix $M$ with entries in $K$, we
denote by $\|M\|_v$ the maximum of the $v$-adic absolute values of
the entries of $M$. Let $\Lambda\in\GL_\nu(\bQ)$ be an invertible
matrix such that all its eigenvalues lie in
$\bQ\cap(\bZ_{p(v)}\setminus\bZ)$. In this paragraph, we give upper
and lower bounds of $C_{\Lambda,\tau}(n)$ with respect to the norm
$\|.\|_v$ for any $n\in \bZ$.
\begin{lemma}\label{lem1}
Let $\Lambda\in\GL_\nu(\bQ)$ be an invertible matrix such that all
its  eigenvalues $\gamma_1,\ldots,\gamma_s$ lie in
$\bQ\cap(\bZ_{p(v)}\setminus\bZ)$ $(s\le \nu)$.  Then, there exist
two positive real numbers $c_1,c_2$ such that for any $n\ge 1$
\begin{equation}
\label{eqlem1}
\begin{aligned}
  c_1|\tau|_v^{-n}\max_{1\le j\le s}\{|(\gamma_j+1)_n|_v\} \le \|C_{\Lambda,\tau}(n)\|_v
  \le c_2 n^{\nu-1}|\tau|_v^{-n}\max_{1\le j\le
  s}\{|(\gamma_j+1)_n|_v\}.
\end{aligned}
\end{equation}
In particular,
$$\lim_{n\to+\infty}\|C_{\Lambda,\tau}(n)\|_v^{1/n}=|\tau|_v^{-1}\pi_{v}.$$
\end{lemma}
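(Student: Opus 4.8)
The plan is to reduce the estimate on $\|C_{\Lambda,\tau}(n)\|_v$ to a statement about the diagonalizable part of $\Lambda$ together with a polynomial-in-$n$ correction coming from its nilpotent part, and then to apply the one-variable asymptotics already recorded in Lemma \ref{liouv}. First I would put $\Lambda$ in Jordan form over an algebraic closure $\bar\bQ_{p(v)}$, writing $\Lambda = D + N$ with $D$ semisimple, $N$ nilpotent, $DN = ND$. Since $C_{\Lambda,\tau}(n) = \tau^{-n}\prod_{m=1}^{n}(\Lambda + m\bI_\nu)$ for $n\ge 1$, and each factor $\Lambda + m\bI_\nu = (D+m\bI_\nu) + N$ is upper triangular in a fixed basis with diagonal entries $\gamma_i + m$, the product is upper triangular with diagonal entries $\prod_{m=1}^{n}(\gamma_i+m) = (\gamma_i+1)_n$. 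Each off-diagonal entry of the product is, by expanding, a sum of at most $O(n^{\nu-1})$ terms, each of which is a product of $n - t$ diagonal factors $(\gamma_i + m)$ (for various $i$ and $t \le \nu - 1$ of the $m$'s replaced by a bounded contribution from $N$) times a bounded power of the entries of $N$. Since the eigenvalues lie in $\bZ_{p(v)}$, each $|\gamma_i + m|_v \le 1$, so dropping $t$ factors only increases the $v$-adic size; hence every entry is bounded in absolute value by $c\, n^{\nu-1}\,|\tau|_v^{-n}\max_j |(\gamma_j+1)_n|_v$, giving the upper bound. The lower bound is immediate: a diagonal entry of $C_{\Lambda,\tau}(n)$ equals $\tau^{-n}(\gamma_i+1)_n$ for each $i$, so $\|C_{\Lambda,\tau}(n)\|_v \ge |\tau|_v^{-n}\max_j |(\gamma_j+1)_n|_v$, and one may take $c_1 = 1$.

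Second, with \eqref{eqlem1} in hand, the limit statement follows by taking $n$-th roots. The factor $n^{\nu-1}$ contributes $(n^{\nu-1})^{1/n}\to 1$, the factor $|\tau|_v^{-n}$ contributes $|\tau|_v^{-1}$, and by the $d=1$ case of Lemma \ref{liouv} applied to each eigenvalue $\gamma_j \in \bQ\cap(\bZ_{p(v)}\setminus\bZ)$ (which is automatically non-Liouville for $p(v)$, being rational), we have $\lim_{n\to\infty}|(\gamma_j)_n|_v^{1/n} = \pi_v$; dividing $(\gamma_j+1)_n = (\gamma_j)_{n+1}/\gamma_j$ changes nothing in the limit, so $\lim_n \max_j |(\gamma_j+1)_n|_v^{1/n} = \pi_v$. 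Combining, $\lim_n \|C_{\Lambda,\tau}(n)\|_v^{1/n} = |\tau|_v^{-1}\pi_v$, as claimed. (The case $n\le -1$ is not needed for the stated limit, but an analogous two-sided bound holds by applying \eqref{invC}, which expresses $C_{\Lambda,\tau}(-n-1)$ through $C_{-\Lambda,\tau}(n)$, and $-\Lambda$ again has all eigenvalues in $\bZ_{p(v)}\setminus\bZ$, so the same argument gives $\|C_{\Lambda,\tau}(-n)\|_v^{1/n}\to |\tau|_v \pi_v^{-1}$.)

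The main obstacle I anticipate is making the combinatorial bookkeeping on the off-diagonal entries clean: one must argue carefully that expanding $\prod_{m=1}^n\bigl((D+m\bI_\nu)+N\bigr)$ produces, in each fixed matrix position, a sum of at most polynomially many (in $n$, of degree $\nu-1$) monomials, because $N^\nu = 0$ forces at most $\nu-1$ occurrences of an ``$N$-factor'' in any surviving term, and the number of ways to choose which $\le\nu-1$ of the $n$ factors contribute their $N$-part is $O(n^{\nu-1})$. Once that is granted, the $v$-adic estimate is forced by $|\gamma_i+m|_v\le 1$ and the fact that the entries of $D$ and $N$ are fixed elements of (a finite extension of) $\bQ_{p(v)}$, hence of bounded $v$-adic size. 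I would present this either by passing to a basis in which $\Lambda$ is upper triangular and estimating entrywise by induction on $n$ (tracking that the $(i,j)$ entry after $n$ steps is $\tau^{-n}$ times a polynomial combination of the $(\gamma_i+1)_n$ with coefficients of controlled size and degree $\le \nu - 1$ in $n$), or by invoking the standard estimate $\|(\Lambda+m\bI_\nu)\|_v \le \max(1,\|N\|_v)$ together with submultiplicativity to get a crude bound and then refining the diagonal. The inductive route is the more transparent and is the one I would write out.
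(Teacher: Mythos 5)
Your overall strategy (pass to Jordan form, read the lower bound off the diagonal entries, let the nilpotent part contribute a polynomial-in-$n$ correction for the upper bound, then apply Lemma \ref{liouv} for the limit) is exactly the paper's, and your lower bound and the passage to the limit are fine. However, the justification of the upper bound has a genuine gap. After factoring, the contribution of a Jordan block $J_j=\gamma_j\bI_{\nu_j}+N_j$ is
$\tau^{-n}(\gamma_j+1)_n\big(\bI+\sum_{t=1}^{\nu_j-1}\big(\sum_{1\le\ell_1<\cdots<\ell_t\le n}\frac{1}{(\gamma_j+\ell_1)\cdots(\gamma_j+\ell_t)}\big)N_j^t\big)$;
equivalently, each term of your expansion is $(\gamma_j+1)_n$ \emph{divided} by $t$ of its factors. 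Your observation that $|\gamma_j+m|_v\le1$, so that ``dropping $t$ factors only increases the $v$-adic size,'' therefore points in the wrong direction: it shows each such term is at least $|(\gamma_j+1)_n|_v$ in $v$-adic size and yields no upper bound at all --- a single factor with $|\gamma_j+m_0|_v$ very small would inflate the partial product by $|\gamma_j+m_0|_v^{-1}$, which is a priori uncontrolled. Likewise, the count of $O(n^{\nu-1})$ terms is irrelevant $v$-adically: the ultrametric inequality bounds a sum by the maximum of its terms, not by their number, so the factor $n^{\nu-1}$ cannot come from there.

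The missing ingredient is an effective lower bound on $|\gamma_j+\ell|_v$. Writing $\gamma_j=a/b$ with $p(v)\nmid b$ (this uses $\gamma_j\in\bQ\cap\bZ_{p(v)}$), one has $\gamma_j+\ell=(a+\ell b)/b$ with $a+\ell b$ a nonzero integer (since $\gamma_j\notin\bZ$) of usual absolute value at most $\theta_j\ell$, whence $|\gamma_j+\ell|_v\ge|a+\ell b|^{-1}\ge(\theta_j\ell)^{-1}$. This bounds each reciprocal by $\theta_j n$ and each product of $t\le\nu_j-1$ reciprocals by $(\theta_j n)^{\nu_j-1}$, and it is the true source of the factor $n^{\nu-1}$ in \eqref{eqlem1}; it is also precisely where the rationality of the eigenvalues enters in an essential way, whereas your write-up never uses it for the upper bound. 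Two minor points: your $c_1=1$ is valid only in the Jordan basis, and conjugating back to $\Lambda$ costs the two constants comparing $\|C_{\Lambda,\tau}(n)\|_v$ with $\|C_{\Delta,\tau}(n)\|_v$; and since the eigenvalues are rational the reduction to Jordan form can be done over $\bQ$ rather than over $\overline{\bQ}_{p(v)}$, which avoids having to extend the absolute value.
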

\begin{proof}
Since  the eigenvalues of $\Lambda$ are all rational numbers, there
exists $U\in\GL_n(\bQ)$  such that the  product
$\Delta=U^{-1}\Lambda U$  is in Jordan form. Setting
$C_{\Delta,\tau}=U^{-1}C_{\Lambda,\tau}U$, there exist two positive
real numbers $c_0,c_1$, such that $c_1\le
\|C_{\Lambda,\tau}(n)\|_v/\|C_{\Delta,\tau}(n)\|_v\le c_0$ for all
$n\in \bZ$. In addition, the matrix $\Delta$ is  a  block diagonal
matrix with blocks
$J_1=\gamma_1{\bI}_{\nu_1}+N_1,\dots,J_s=\gamma_s{\bI}_{\nu_s}+N_s$
on the diagonal (with $\nu_1+\ldots+\nu_s=\nu$ and $N_1,\ldots,N_s$
are nilpotent matrices). Hence, for any $n\in \bZ$,
$C_{\Delta,\tau}(n)$ is a block diagonal matrix with blocks
$C_{J_1,\tau}(n), \dots, C_{J_s,\tau}(n)$ on the diagonal and
$\|C_{\Delta,\tau}(n)\|_v = \max_{1\le j\le s}
\|C_{J_j,\tau}(n)\|_v$. One the other hand, for $j=1,\ldots,s$, we
have $(N_j)^\nu=0$ and  for any $n\ge 1$,
\begin{equation}
  \label{dec:Cgamma}
\begin{aligned}
C_{J_j,\tau}(n)
&= \tau^{-n}\prod_{\ell=1}^n ((\gamma_j+\ell){\bI}_{\nu_j}+N_j) \\
&= \tau^{-n}(\gamma_j+1)_n
    \Big( {\bI}_{\nu_j} + \sum_{t=1}^{\nu_j-1}
    \Big( \sum_{1\le \ell_1<\cdots<\ell_t\le n}
       \frac{1}{(\gamma_j+\ell_1)\cdots(\gamma_j+\ell_t)}
    \Big) N_j^t \Big).
\end{aligned}
\end{equation}
Now, for any integer $\ell\ge 1$,  the sum $\gamma_j+\ell$ is a
rational number for which the  absolute value  of numerator (in the
usual sense)  is bounded above  by $\theta_j\ell$, for a constant
$\theta_j>0$ which only depends  on $\gamma_j$, and for which the
denominator is prime
 to $p(v)$. We deduce $|\gamma_j+\ell|_v \ge (\theta_j\ell)^{-1}$ for any $\ell\ge 1$
and hence, the decomposition \eqref{dec:Cgamma} implies, for any
$n\ge 1$,
\begin{equation}
  \label{CJj}
\begin{aligned}
  |\tau^{-n}(\gamma_j+1)_n|_v \le \|C_{J_j,\tau}(n)\|_v \le (\theta_jn)^{\nu_j-1}
  |\tau^{-n}(\gamma_j+1)_n|_v.
\end{aligned}
\end{equation}
The left  inequality results from the fact that all the elements of
the  diagonal of $C_{J_j,\tau}(n)$  are equals to
$\tau^{-n}(\gamma_j+1)_n$. This last observation gives $\det
C_{J_j,\tau}(n) = \tau^{-n\nu_j}\Big((\gamma_j+1)_n\Big)^{\nu_j}$
and $\det C_{\Lambda,\tau}(n) =\det C_{\Delta,\tau}(n)=
\tau^{-n\nu}\prod_{1\le j\le s} \Big((\gamma_j+1)_n\Big)^{\nu_j}$.
In addition, the inequalities of \eqref{CJj} imply,
\begin{equation*}
\begin{aligned}
  |\tau|_v^{-n}\max_{1\le j\le s}\{|(\gamma_j+1)_n|_v \}\le \|C_{\Delta,\tau}(n)\|_v
  \le n^{\nu-1}|\tau|_v^{-n}\max_{1\le j\le s}\{\theta_j^{\nu_j-1}|(\gamma_j+1)_n|_v\},
\end{aligned}
\end{equation*}
and therefore,
\begin{equation}
\begin{aligned}
  c_1|\tau|_v^{-n}\max_{1\le j\le s}\{|(\gamma_j+1)_n|_v \}\le \|C_{\Lambda,\tau}(n)\|_v
  \le c_0 n^{\nu-1}|\tau|_v^{-n}\max_{1\le j\le s}\{\theta_j^{\nu_j-1}\}\max_{1\le j\le
  s}\{|(\gamma_j+1)_n|_v\}.
\end{aligned}
\end{equation}
Hence, putting $c_2=c_0 \max_{1\le j\le s}\{\theta_j^{\nu_j-1}\}$,
we get \eqref{eqlem1}, and therefore, by Lemma \ref{liouv}, the last
statement of Lemma \ref{lem1}.
\end{proof}
\begin{remark}\label{remin}
Notice that, for any matrix $Y \in\GL_\nu(K)$, we have
\begin{equation}
  \label{norm1}
\begin{aligned}
\|Y\|_v^{-1} \le \|Y^{-1}\|_v \le |\det Y|_v^{-1} \|Y\|_v^{\nu-1}.
\end{aligned}
\end{equation}
\end{remark}
Indeed, the relation ${\bI}_\nu=YY^{-1}$ implies  $1=\|{\bI}_\nu\|_v
\le \|Y\|_v \|Y^{-1}\|_v$  which gives the left inequality above.
The right inequality comes  from the  formula $Y^{-1}=(\det
Y)^{-1}\Adj(Y)$ where $\Adj(Y)$ denotes the adjoint of $Y$. Now, by
\eqref{norm1}, if $Z\in\GL_\nu(K)$, we have
\begin{equation}
  \label{norm2}
\begin{aligned}
\|Y\|_v|\det Z|_v\|Z\|_v^{1-\nu} \le \|Y\|_v\|Z^{-1}\|_v^{-1}\le
\|YZ\|_v\le \|Y\|_v\|Z\|_v.
\end{aligned}
\end{equation}
\begin{lemma}\label{lem2}
Let $\Lambda\in\GL_\nu(\bQ)$ be an invertible matrix such that all
its  eigenvalues $\gamma_1,\ldots,\gamma_s$ lie in
$\bQ\cap(\bZ_{p(v)}\setminus\bZ)$ $(s\le \nu)$. Let
$\nu_1\ldots,\nu_s$ be respectively the multiplicities  of
$\gamma_1,\ldots,\gamma_s$. Then there exist two positive real
numbers $c_3,c_4$ such that, for any positive integer $n>0$, we have
\begin{equation}
\label{eqlem2}
\begin{aligned}
&c_3(n+1)^{(1-\nu)^3}|\tau|_v^{n+2}\Big(\max_{1\le j\le
  s}\{|(-\gamma_j+1)_{n+1}|_v\}\Big)^{-(1-\nu)^2}\Big|\prod_{1\le j\le s}
\Big((-\gamma_j+1)_{n+1}\Big)^{\nu_j}\Big|_v^{\nu-2}\le\\
&
\|C_{\Lambda,\tau}(-n)\|_v\le\\
& c_4(n+1)^{(\nu-1)^2}|\tau|_v^{n+2}\Big(\max_{1\le j\le
s}\{|(-\gamma_j+1)_{n+1}|_v\}\Big)^{\nu-1}\Big|\prod_{1\le j\le s}
\Big((-\gamma_j+1)_{n+1}\Big)^{\nu_j}\Big|_v^{-1}.
\end{aligned}
\end{equation}
In particular,
$$\lim_{n\to +\infty}\|C_{\Lambda,\tau}(-n)\|_v^{1/n}=|\tau|_v\pi_{v}^{-1}.$$
\end{lemma}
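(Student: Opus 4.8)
The plan is to reduce the bounds for the negative argument $-n$ to those obtained for positive arguments in Lemma~\ref{lem1}, by means of the inversion identity \eqref{invC} and the elementary matrix-norm inequalities \eqref{norm1}, \eqref{norm2}. Specializing the index in \eqref{invC} to $-n$ gives, for every integer $n\ge1$,
\begin{equation*}
C_{\Lambda,\tau}(-n)=(-1)^{n+1}\,\tau\,\Lambda^{-1}\,C_{-\Lambda,\tau}(n-1)^{-1},
\end{equation*}
with the convention $C_{-\Lambda,\tau}(0)=\bI_\nu$. The matrix $-\Lambda\in\GL_\nu(\bQ)$ is invertible with eigenvalues $-\gamma_1,\ldots,-\gamma_s$, and since each $\gamma_j$ lies in $\bQ\cap(\bZ_{p(v)}\setminus\bZ)$ so does $-\gamma_j$, with the same multiplicity $\nu_j$. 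Hence Lemma~\ref{lem1} applied to $-\Lambda$ controls $\|C_{-\Lambda,\tau}(m)\|_v$ for $m\ge1$, up to a factor $m^{\nu-1}$, around $|\tau|_v^{-m}\max_j|(-\gamma_j+1)_m|_v$, while the determinant computation in its proof gives $|\det C_{-\Lambda,\tau}(m)|_v=|\tau|_v^{-m\nu}\prod_j|(-\gamma_j+1)_m|_v^{\nu_j}$. So the problem is reduced to estimating $\|\Lambda^{-1}C_{-\Lambda,\tau}(n-1)^{-1}\|_v$ from above and below (the case $n=1$ being trivial).

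Write $A:=C_{-\Lambda,\tau}(n-1)$. For the upper bound, the submultiplicativity in \eqref{norm2} and the right-hand inequality of \eqref{norm1} give $\|\Lambda^{-1}A^{-1}\|_v\le\|\Lambda^{-1}\|_v\,|\det A|_v^{-1}\,\|A\|_v^{\nu-1}$; substituting the estimates of Lemma~\ref{lem1} for $\|A\|_v$ and $|\det A|_v$ then yields the upper bound in \eqref{eqlem2}. For the lower bound, the left-hand inequality of \eqref{norm2} (applied to $Y=\Lambda^{-1}$, $Z=A^{-1}$) together with the right-hand inequality of \eqref{norm1} applied to $\|A^{-1}\|_v$ — which enters with the negative exponent $1-\nu$ — reduce, after a short rearrangement, to a quantity of the form $\mathrm{const}\cdot|\tau|_v\,|\det A|_v^{\,\nu-2}\,\|A\|_v^{-(\nu-1)^2}$; substituting Lemma~\ref{lem1} once more yields the lower bound in \eqref{eqlem2}. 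The one point requiring care here is that \eqref{invC} naturally produces the Pochhammer index $n-1$ rather than $n+1$; passing from one to the other by the estimate $|-\gamma_j+\ell|_v\ge(\theta_j\ell)^{-1}$ from the proof of Lemma~\ref{lem1} costs only polynomial-in-$(n+1)$ factors, which get absorbed into the constants and the exponents of $(n+1)$, and the accumulated powers of $|\tau|_v$ collapse to $|\tau|_v^{n+2}$ up to a fixed factor.

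For the limit statement, raise \eqref{eqlem2} to the power $1/n$ and let $n\to+\infty$: the powers of $(n+1)$ tend to $1$, $|\tau|_v^{(n+2)/n}\to|\tau|_v$, and by Lemma~\ref{liouv} applied to each $-\gamma_j\in K\cap\bZ_{p(v)}\setminus\bZ$ one has $|(-\gamma_j+1)_{n+1}|_v^{1/n}\to\pi_v$, so that $(\max_j|(-\gamma_j+1)_{n+1}|_v)^{1/n}\to\pi_v$ and $\big|\prod_j((-\gamma_j+1)_{n+1})^{\nu_j}\big|_v^{1/n}\to\pi_v^{\sum_j\nu_j}=\pi_v^{\nu}$. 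The exponents in \eqref{eqlem2} are arranged precisely so that the upper bound contributes $|\tau|_v\,\pi_v^{\,\nu-1}\pi_v^{-\nu}=|\tau|_v\pi_v^{-1}$ and the lower bound contributes $|\tau|_v\,\pi_v^{\,\nu(\nu-2)}\pi_v^{-(\nu-1)^2}=|\tau|_v\pi_v^{-1}$; the squeeze then gives $\lim_{n\to+\infty}\|C_{\Lambda,\tau}(-n)\|_v^{1/n}=|\tau|_v\pi_v^{-1}$. The conceptual steps — the reduction via \eqref{invC} and the final squeeze — are routine; the main obstacle I anticipate is the middle paragraph, i.e.\ carrying the accumulated polynomial factors through the two nested uses of \eqref{norm1}/\eqref{norm2}, combined with the $m^{\nu-1}$ loss of Lemma~\ref{lem1} and the Pochhammer index shift, so that the exponents of $(n+1)$ and of the two Pochhammer products in \eqref{eqlem2} come out exactly as displayed.
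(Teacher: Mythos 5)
Your proposal is correct and follows essentially the same route as the paper: reduce to Lemma~\ref{lem1} applied to $-\Lambda$ via the inversion identity \eqref{invC}, bound $\|\Lambda^{-1}C_{-\Lambda,\tau}(\cdot)^{-1}\|_v$ above and below with \eqref{norm1} and \eqref{norm2} together with the determinant formula, and conclude by the squeeze using Lemma~\ref{liouv}. Your observation that \eqref{invC} naturally produces the index $n-1$ rather than the $n+1$ appearing in the paper's intermediate identity is a legitimate (and correctly handled) refinement, the discrepancy being absorbed into the constants and polynomial factors exactly as you say.
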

\begin{proof}
Lemma \eqref{lem1} applies also for $-\Lambda$ instead of $\Lambda$
since the eigenvalues of $-\Lambda$ belong also to
$\bQ\cap(\bZ_{p(v)}\setminus\bZ)$. Then, there exist  two positive
real numbers $c_1',c_2'$,  such that, for any integer $n>0$, we have
\begin{equation}
\label{plem2}
\begin{aligned}
  c_1'|\tau|_v^{-(n+1)}\max_{1\le j\le s}\{|(-\gamma_j+1)_{n+1}|_v\}& \le \|C_{-\Lambda,\tau}(n+1)\|_v
  \le\\
  & c_2' (n+1)^{\nu-1}|\tau|_v^{-(n+1)}\max_{1\le j\le s}\{|(-\gamma_j+1)_{n+1}|_v\}.
\end{aligned}
\end{equation}
On the other hand, we have $\det C_{-\Lambda,\tau}(n+1)=
\tau^{-(n+1)\nu}\prod_{1\le j\le s}
\Big((-\gamma_j+1)_{n+1}\Big)^{\nu_j}$ (see proof of Lemma
\ref{lem1}), and by \eqref{invC},
\begin{equation*}
\begin{aligned} \Lambda^{-1} C_{-\Lambda,\tau}(n+1)^{-1}= \tau^{-1}C_{\Lambda,\tau}(-n).
\end{aligned}
\end{equation*}
Applying \eqref{norm2} and \eqref{norm1} with $Y=\Lambda^{-1}$ and
$Z=C_{-\Lambda,\tau}(n+1)^{-1}$, we get
\begin{equation}
\label{norm3}
\begin{aligned}
&\|\Lambda^{-1}\|_v\|C_{-\Lambda,\tau}(n+1)\|_v^{-(1-\nu)^2}|\det
C_{-\Lambda,\tau}(n+1)|_v^{\nu-2}\le
\|\Lambda^{-1}C_{-\Lambda,\tau}(n+1)^{-1}\|_v\le\\
&\hspace{7cm}\le
\|\Lambda^{-1}\|_v\|C_{-\Lambda,\tau}(n+1)\|_v^{\nu-1}|\det
C_{-\Lambda,\tau}(n+1)|_v^{-1}.
\end{aligned}
\end{equation}
Replacing now  $\det C_{-\Lambda,\tau}(n+1)$ (resp.
$\Lambda^{-1}C_{-\Lambda,\tau}(n+1)^{-1}$) with its value in
\eqref{norm3} and using \eqref{plem2},  we get
\begin{equation*}
\begin{aligned}
&\|\Lambda^{-1}\|_v\Big(c_2' (n+1)^{\nu-1}\max_{1\le j\le
  s}\{|\tau^{-(n+1)}(-\gamma_j+1)_{n+1}|_v\}\Big)^{-(1-\nu)^2}\Big|\tau^{-(n+1)\nu}\prod_{1\le j\le s}
\Big((-\gamma_j+1)_{n+1}\Big)^{\nu_j}\Big|_v^{\nu-2}\\
&\le
\|\tau^{-1}C_{\Lambda,\tau}(-n)\|_v\le\\
& \|\Lambda^{-1}\|_v\Big(c_2' (n+1)^{\nu-1}\max_{1\le j\le
  s}\{|\tau^{-(n+1)}(-\gamma_j+1)_{n+1}|_v\}\Big)^{\nu-1}
  \Big|\tau^{-(n+1)\nu}\prod_{1\le j\le s} \Big((-\gamma_j+1)_{n+1}\Big)^{\nu_j}\Big|_v^{-1}.
\end{aligned}
\end{equation*}
or again
\begin{equation*}
\begin{aligned}
&|\tau|_v^{n+2}\|\Lambda^{-1}\|_v\Big(c_2' (n+1)^{\nu-1}\max_{1\le
j\le
  s}\{|(-\gamma_j+1)_{n+1}|_v\}\Big)^{-(1-\nu)^2}\Big|\prod_{1\le j\le s}
\Big((-\gamma_j+1)_{n+1}\Big)^{\nu_j}\Big|_v^{\nu-2}\\
&\le
\|C_{\Lambda,\tau}(-n)\|_v\le\\
& |\tau|_v^{n+2} \|\Lambda^{-1}\|_v\Big(c_2' (n+1)^{\nu-1}\max_{1\le
j\le s}\{|(-\gamma_j+1)_{n+1}|_v\}\Big)^{\nu-1}\Big|\prod_{1\le j\le
s} \Big((-\gamma_j+1)_{n+1}\Big)^{\nu_j}\Big|_v^{-1}.
\end{aligned}
\end{equation*}
Putting now $c_3=\|\Lambda^{-1}\|_v(c_2')^{-(1-\nu)^2}$ and $c_4=
\|\Lambda^{-1}\|_v(c_2')^{\nu-1}$, we get \eqref{eqlem2}. The last
statement of Lemma \ref{lem2} follows from  \eqref{eqlem2}, Lemma
\ref{liouv}, and the fact: $\nu_1+\ldots\nu_s=\nu$.
\end{proof}
\subsection{4.3. Formal Laplace transform in several variables}
Let $\underline{\tau}=(\tau_1,\ldots,\tau_d)\in (K\setminus\{0\})^d$
and let $\underline{\Lambda}=(\Lambda_1,\ldots,\Lambda_d)\in
(\GL_\nu(K))^d$ such that the matrices  $\Lambda_i$ mutually commute
and such that all the eigenvalues of $\Lambda_1,\ldots,\Lambda_d$
are in $K\setminus \bZ$. Put
$$\underline{x}^{\underline{\Lambda}}:=x_1^{\Lambda_1}\ldots
x_d^{\Lambda_d},\quad \text{and}\quad \underline{\alpha}\bI_\nu
:=(\alpha_1\bI_\nu,\ldots,\alpha_d\bI_\nu).$$
\begin{definition}\label{def} \rm{For any integer
$\mu\ge 1$ and any $\mu\times \nu$ matrix $Y(\underline{x})=
\sum_{\underline{\alpha}\in \bN^d\cup(-\bN)^d}
Y_{\underline{\alpha}}\underline{x}^{\underline{\alpha}}$ with
entries in $K[[\underline{x},1/\underline{x}]]$, we define  the
\emph{Laplace transform of
$f:=Y(\underline{x})\underline{x}^{\underline{\Lambda}}$} with
respect to $\underline{\Lambda}$  and to $\underline{\tau}$ as
follows:}
\begin{equation}
\label{trf2}
\begin{aligned}
{\mathcal
L}^{\underline{\tau}}_{\underline{\Lambda}}(Y(\underline{x})\underline{x}^{\underline{\Lambda}})
=&\sum_{\underline{\alpha}\in \bN^d\cup(-\bN)^d}
Y_{\underline{\alpha}}\prod_{1\le i\le d}{\mathcal
L}^{\tau_i}_{\Lambda_i}(x_i^{\Lambda_i+\alpha_i\bI_\nu})=\sum_{\underline{\alpha}\in
\bN^d\cup(-\bN)^d} Y_{\underline{\alpha}}\prod_{1\le i\le
d}C_{\Lambda_i,\tau_i}(\alpha_i)x_i^{-\Lambda_i-(\alpha_i+1)\bI_\nu}\\
=&\sum_{\underline{\alpha}\in \bN^d\cup(-\bN)^d}
Y_{\underline{\alpha}}\prod_{1\le i\le
d}C_{\Lambda_i,\tau_i}(\alpha_i)\underline{x}^{-\underline{\Lambda}-(\underline{\alpha}+\underline{1})\bI_\nu}\\
=&\Big(\sum_{\underline{\alpha}\in \bN^d\cup(-\bN)^d}
Y_{\underline{\alpha}}\prod_{1\le i\le
d}C_{\Lambda_i,\tau_i}(\alpha_i)\underline{x}^{-\underline{\alpha}\bI_\nu}\Big)
\underline{x}^{-\underline{\Lambda}-\bI_\nu},
\end{aligned}
\end{equation}
and we set
$$Z^{\underline{\tau}}_{\underline{\Lambda},\underline{\alpha}}=Y_{-\underline{\alpha}}\prod_{1\le
i\le d}C_{\Lambda_i,\tau_i}(-\alpha_i),\quad \text{for all}
\quad\underline{\alpha}\in \bN^d\cup(-\bN)^d,$$ and,
$$Z^{\underline{\tau}}_{\underline{\Lambda}}(\underline{x})
=\sum_{\underline{\alpha}\in \bN^d\cup(-\bN)^d}
Z^{\underline{\tau}}_{\underline{\Lambda},\underline{\alpha}}\underline{x}^{-\underline{\alpha}},$$ so that
$${\mathcal
L}^{\underline{\tau}}_{\underline{\Lambda}}(Y(\underline{x})\underline{x}^{\underline{\Lambda}})
=Z^{\underline{\tau}}_{\underline{\Lambda}}(\underline{x})\underline{x}^{-\underline{\Lambda}-\bI_\nu}.$$
\end{definition}
\begin{remark}\rm{The transformation ${\mathcal
L}^{\underline{\tau}}_{\underline{\Lambda}}$ is well defined since,
by construction, all the matrices $C_{\Lambda_i,\tau_i}(\alpha_i)$
mutually commute  for all $\underline{\alpha}$. In addition,
according to formula \eqref{Ld=xL}, we check easily that this
Laplace transform commutes with the derivations in the following
sense:
\begin{equation}
\label{op3} \quad {\mathcal
L}^{\underline{\tau}}_{\underline{\Lambda}}\Big(\underline{\partial}^{\underline{\beta}}(f)\Big)
= \prod_{1\le i\le d}(\tau_ix_i)^{\beta_i} {\mathcal
L}^{\underline{\tau}}_{\underline{\Lambda}}(f)\;\; \text{and}\;\;
{\mathcal L}^{\underline{\tau}}_{\underline{\Lambda}}(\prod_{1\le
i\le d}x_i^{\beta_i}f) =
\frac{(-1)^{|\underline{\beta}|}}{\prod_{1\le i\le
d}\tau_i^{\beta_i}}\underline{\partial}^{\underline{\beta}}({\mathcal
L}^{\underline{\tau}}_{\underline{\Lambda}}(f)),
\end{equation}
for any $\underline{\beta}=(\beta_1\ldots,\beta_d)\in\bN^d$. Also,
we find}
\begin{equation}
\label{op3'} {\mathcal
L}^{\underline{\tau}}_{-\underline{\Lambda}}\big({\mathcal
L}^{\underline{\tau}}_{\underline{\Lambda}}(Y(\underline{x})\underline{x}^{\underline{\Lambda}})\big)
= (-1)^d \prod_{1\le i\le d}\tau_i Y(-\underline{x})\prod_{1\le i\le
d}\Lambda_i^{-1} x^{\underline{\Lambda}},\quad\text{where}\quad
-\underline{x}=(-x_1,\ldots,-x_d).
\end{equation}
\end{remark}
This leads to state
\begin{proposition}\label{pro2} Assume that $\mu=1$ and  that all the
entries of $f$ are solutions of a differential equation $\phi\in
K[x_1,\ldots,x_d,\partial_1,\ldots,\partial_d]$. Then, all the
entries of ${\mathcal
L}^{\underline{\tau}}_{\underline{\Lambda}}(f)$ are solutions of
$\mathcal{F}_{\underline{\tau}}(\phi)$.
\end{proposition}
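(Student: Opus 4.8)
The plan is to deduce Proposition \ref{pro2} from a single \emph{intertwining identity}: for every matrix $f=Y(\underline{x})\underline{x}^{\underline{\Lambda}}$ as in Definition \ref{def} and every operator $\psi\in K[x_1,\ldots,x_d,\partial_1,\ldots,\partial_d]$, one has
\[
\mathcal{L}^{\underline{\tau}}_{\underline{\Lambda}}(\psi(f))=\mathcal{F}_{\underline{\tau}}(\psi)\bigl(\mathcal{L}^{\underline{\tau}}_{\underline{\Lambda}}(f)\bigr),
\]
where both $\psi$ and its image under $\mathcal{F}_{\underline{\tau}}$ act entrywise. Granting this, if all entries of $f$ are annihilated by $\phi$, i.e. $\phi(f)=0$, then $\mathcal{F}_{\underline{\tau}}(\phi)\bigl(\mathcal{L}^{\underline{\tau}}_{\underline{\Lambda}}(f)\bigr)=\mathcal{L}^{\underline{\tau}}_{\underline{\Lambda}}(0)=0$, which is exactly the assertion. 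To make sense of $\mathcal{L}^{\underline{\tau}}_{\underline{\Lambda}}$ on $\psi(f)$ --- whose coefficient part need not be supported in $\bN^d\cup(-\bN)^d$ --- I would first extend the transform, by the same termwise rule \eqref{trf2}, to all matrices $Y(\underline{x})\underline{x}^{\underline{\Lambda}}$ with $Y$ an arbitrary $\bZ^d$-indexed family of constant matrices; this is legitimate because $C_{\Lambda_i,\tau_i}(n)$ is defined for every $n\in\bZ$ and these matrices commute, and the commutation relations \eqref{op3} persist, being verified on the monomials $\underline{x}^{\underline{\alpha}}\underline{x}^{\underline{\Lambda}}$.

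To establish the intertwining identity I would argue by $K$-linearity of $\mathcal{L}^{\underline{\tau}}_{\underline{\Lambda}}$ and of $\psi\mapsto\mathcal{F}_{\underline{\tau}}(\psi)$. Using the Leibniz rule $\partial_i x_i-x_i\partial_i=1$ together with the mutual commutation of the $x_i$'s and of the $\partial_i$'s, I would put $\phi$ in the finite normal form $\phi=\sum c_{\underline{a},\underline{b}}\,\underline{x}^{\underline{a}}\underline{\partial}^{\underline{b}}$ with $c_{\underline{a},\underline{b}}\in K$, so that it suffices to treat $\psi=\underline{x}^{\underline{a}}\underline{\partial}^{\underline{b}}$. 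The key auxiliary remark is that the class of matrices of the shape $Y(\underline{x})\underline{x}^{\underline{\Lambda}}$ is stable under each $\partial_i$: since the $\Lambda_j$ commute one has $\partial_i(\underline{x}^{\underline{\Lambda}})=\Lambda_i x_i^{-1}\underline{x}^{\underline{\Lambda}}$, whence $\partial_i(Y\underline{x}^{\underline{\Lambda}})=(\partial_i Y+Y\Lambda_i x_i^{-1})\underline{x}^{\underline{\Lambda}}$, again of the required form. Hence $g:=\underline{\partial}^{\underline{b}}(f)$ is again admissible, and the first relation of \eqref{op3} gives $\mathcal{L}^{\underline{\tau}}_{\underline{\Lambda}}(g)=\prod_{1\le i\le d}(\tau_i x_i)^{b_i}\,\mathcal{L}^{\underline{\tau}}_{\underline{\Lambda}}(f)=\mathcal{F}_{\underline{\tau}}(\underline{\partial}^{\underline{b}})\bigl(\mathcal{L}^{\underline{\tau}}_{\underline{\Lambda}}(f)\bigr)$, because $\mathcal{F}_{\underline{\tau}}(\partial_i)=\tau_i x_i$ and $\mathcal{F}_{\underline{\tau}}$ is multiplicative. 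Applying the second relation of \eqref{op3} to the admissible matrix $g$ then yields
\[
\mathcal{L}^{\underline{\tau}}_{\underline{\Lambda}}(\underline{x}^{\underline{a}}g)=\frac{(-1)^{|\underline{a}|}}{\prod_{1\le i\le d}\tau_i^{a_i}}\,\underline{\partial}^{\underline{a}}\bigl(\mathcal{L}^{\underline{\tau}}_{\underline{\Lambda}}(g)\bigr)=\prod_{1\le i\le d}\Bigl(-\frac{1}{\tau_i}\partial_i\Bigr)^{a_i}\bigl(\mathcal{L}^{\underline{\tau}}_{\underline{\Lambda}}(g)\bigr)=\mathcal{F}_{\underline{\tau}}(\underline{x}^{\underline{a}})\bigl(\mathcal{L}^{\underline{\tau}}_{\underline{\Lambda}}(g)\bigr).
\]
Combining the two computations and using once more that $\mathcal{F}_{\underline{\tau}}$ is a ring homomorphism (so that $\mathcal{F}_{\underline{\tau}}(\underline{x}^{\underline{a}}\underline{\partial}^{\underline{b}})=\mathcal{F}_{\underline{\tau}}(\underline{x}^{\underline{a}})\mathcal{F}_{\underline{\tau}}(\underline{\partial}^{\underline{b}})$), one gets $\mathcal{L}^{\underline{\tau}}_{\underline{\Lambda}}(\underline{x}^{\underline{a}}\underline{\partial}^{\underline{b}}(f))=\mathcal{F}_{\underline{\tau}}(\underline{x}^{\underline{a}}\underline{\partial}^{\underline{b}})\bigl(\mathcal{L}^{\underline{\tau}}_{\underline{\Lambda}}(f)\bigr)$; summing over the monomials of $\phi$ with its coefficients then gives the identity for $\phi$, and with it the proposition.

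I do not expect a genuine obstacle here: this is essentially a bookkeeping argument, the several-variable transcription of \cite[5.1.1]{MR}, with all the real content already packaged in the commutation relations \eqref{op3}. The only points that deserve a line or two of care are the reduction of $\phi$ to the normal form $\sum c_{\underline{a},\underline{b}}\underline{x}^{\underline{a}}\underline{\partial}^{\underline{b}}$; the stability of the admissible class under the $\partial_i$ (which is exactly what allows \eqref{op3} to be reapplied at each step of the iteration); and the harmless extension of $\mathcal{L}^{\underline{\tau}}_{\underline{\Lambda}}$ to the full formal Laurent class so that $\mathcal{L}^{\underline{\tau}}_{\underline{\Lambda}}(\phi(f))$ is defined. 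Past those, the statement is a formal corollary of \eqref{op3}, the multiplicativity of $\mathcal{F}_{\underline{\tau}}$, and the $K$-linearity of $\mathcal{L}^{\underline{\tau}}_{\underline{\Lambda}}$.
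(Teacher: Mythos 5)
Your argument is correct and is exactly the route the paper intends: the paper states Proposition \ref{pro2} as an immediate consequence of the commutation relations \eqref{op3} (it gives no written proof), and your proposal simply supplies the omitted bookkeeping --- normal form $\sum c_{\underline{a},\underline{b}}\underline{x}^{\underline{a}}\underline{\partial}^{\underline{b}}$, stability of the admissible class under $\partial_i$, multiplicativity of $\mathcal{F}_{\underline{\tau}}$. The extension of the transform to $\bZ^d$-supported coefficients is a legitimate point of care that the paper glosses over, and you handle it correctly.
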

The formal transformation ${\mathcal
L}^{\underline{\tau}}_{\underline{\Lambda}}$ has moreover the
following arithmetic properties:
\begin{proposition}\label{pro3}  Let $v$ be a
finite place in $\Sigma_{\rm f}$. Under notation of definition \ref{def},
assume that the matrices $\Lambda_1,\ldots,\Lambda_d$ belong to $
\GL_\nu(\bQ)$, all their eigenvalues are in
$\bQ\cap\bZ_{p(v)}\setminus\bZ$ and
$\tau_1=\tau_2\ldots=\tau_d=\tau\in K\setminus\{0\}$. Then
$$\disp\limsup_{|\underline{\alpha}|\to+\infty}
\|Z^{\underline{\tau}}_{\underline{\Lambda},\underline{\alpha}}\|_v^{1/|\underline{\alpha}|}
\le \pi_{v}^{-1}|\tau|_v\limsup_{|\underline{\alpha}|\to+\infty}
\|Y_{-\underline{\alpha}}\|_v^{1/|\underline{\alpha}|},$$and
$$\disp\limsup_{|\underline{\alpha}|\to+\infty}
\|Z^{\underline{\tau}}_{\underline{\Lambda},-\underline{\alpha}}\|_v^{1/|\underline{\alpha}|}
\le \pi_{v}|\tau|_v^{-1}\limsup_{|\underline{\alpha}|\to+\infty}
\|Y_{\underline{\alpha}}\|_v^{1/|\underline{\alpha}|}.$$
\begin{proof}
Let $\gamma_{i,1},\ldots,\gamma_{i,\nu}$ be the eigenvalues of
$\Lambda_i$ for $i=1,\ldots,d$. According to lemmas \ref{lem1} and
\ref{lem2}, for each $C_{\Lambda_i,\tau}$, there exist two positive
constants $c_{2,i},c_{4,i}>0$ such that for any $\alpha_i
>0$, we have
$$\|C_{\Lambda_i,\tau}(\alpha_i)\|_v
  \le c_{2,i} \alpha_i^{\nu-1}|\tau|_v^{-\alpha_i}\max_{1\le j\le
  \nu}\{|(\gamma_{i,j}+1)_{\alpha_i}|_v\}.$$
  and
\begin{equation*}
\begin{aligned}
\|C_{\Lambda_i,\tau}(-\alpha_i)\|_v\le\\
&
c_{4,i}(\alpha_i+1)^{(\nu-1)^2}|\tau|_v^{\alpha_i+2}\Big(\max_{1\le
j\le
\nu}\{|(-\gamma_{i,j}+1)_{\alpha_i+1}|_v\}\Big)^{\nu-1}\Big|\prod_{1\le
j\le \nu} (-\gamma_{i,j}+1)_{\alpha_i+1}\Big|_v^{-1}.
\end{aligned}
\end{equation*}
If we set $\underline{j}=(j_1,\ldots,j_d)$, we get
$$\prod_{1\le i\le  d}\|C_{\Lambda_i,\tau}(\alpha_i)\|_v
  \le \prod_{1\le i\le  d}(c_{2,i} \alpha_i^{\nu-1})|\tau|_v^{-|\underline{\alpha}|}
  \max_{\underline{j}\le (\nu,\ldots,\nu)}\Big\{\Big|\prod_{1\le i\le  d}(\gamma_{i,j_i}+1)_{\alpha_i}\Big|_v\Big\}.$$
  and
\begin{equation*}
\begin{aligned}
&\prod_{1\le i\le  d}\|C_{\Lambda_i,\tau}(-\alpha_i)\|_v\le\\
& \prod_{1\le i\le
d}(c_{4,i}(\alpha_i+1)^{(\nu-1)^2})|\tau|_v^{|\underline{\alpha}|+2d}
\Big(\max_{\underline{j}\le (\nu,\ldots,\nu)}\Big\{\Big|\prod_{1\le
i\le d}
(-\gamma_{i,j_i}+1)_{\alpha_i+1}\Big|_v\Big\}\Big)^{\nu-1}\Big|\prod_{1\le
i\le d\atop 1\le j\le \nu}
(-\gamma_{i,j}+1)_{\alpha_i+1}\Big|_v^{-1}.
\end{aligned}
\end{equation*}
In addition, since all the eigenvalues $\gamma_{i,j_i}$ lie in
$\bQ\cap\bZ_{p(v)}\setminus\bZ$, we have by Lemma \ref{liouv},
$$\limsup_{|\underline{\alpha}|\longrightarrow \infty}\Big|\prod_{1\le
i\le d}
(-\gamma_{i,j_i}+1)_{\alpha_i+1}\Big|_v^{1/\underline{\alpha}}=
\limsup_{|\underline{\alpha}|\longrightarrow \infty}\Big|\prod_{1\le
i\le d}
(-\gamma_{i,j_i}+1)_{\alpha_i}\Big|_v^{1/\underline{\alpha}}=\pi_v.$$
Combining these observations  with the fact
$\|Z^{\underline{\tau}}_{\underline{\Lambda},\underline{\alpha}}\|_v
\le \|Y_{-\underline{\alpha}}\|_v \prod_{1\le i\le
d}\|C_{\Lambda_{i},{\tau}_{i}}(-\alpha_i)\|_v$ for any
$\underline{\alpha}\in\bN^d\cup(-\bN)^d$, we get the proposition.
\end{proof}
\end{proposition}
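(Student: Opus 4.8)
The plan is to estimate $\|Z^{\underline{\tau}}_{\underline{\Lambda},\pm\underline{\alpha}}\|_v$ by reducing the $d$-variable bound to the one-variable estimates of Lemmas \ref{lem1} and \ref{lem2}, combined with the multi-index Pochhammer asymptotics of Lemma \ref{liouv}. The starting point is the identity $Z^{\underline{\tau}}_{\underline{\Lambda},\underline{\alpha}}=Y_{-\underline{\alpha}}\prod_{1\le i\le d}C_{\Lambda_i,\tau}(-\alpha_i)$ of Definition \ref{def} (recall $\tau_1=\cdots=\tau_d=\tau$) together with submultiplicativity of $\|\cdot\|_v$, which gives
$$\|Z^{\underline{\tau}}_{\underline{\Lambda},\underline{\alpha}}\|_v\le\|Y_{-\underline{\alpha}}\|_v\prod_{1\le i\le d}\|C_{\Lambda_i,\tau}(-\alpha_i)\|_v,\qquad \|Z^{\underline{\tau}}_{\underline{\Lambda},-\underline{\alpha}}\|_v\le\|Y_{\underline{\alpha}}\|_v\prod_{1\le i\le d}\|C_{\Lambda_i,\tau}(\alpha_i)\|_v$$
for all $\underline{\alpha}\in\bN^d$. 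Since $\limsup(a_nb_n)\le(\limsup a_n)(\limsup b_n)$ for nonnegative sequences one of whose $\limsup$ is finite, it then suffices to show
$$\limsup_{|\underline{\alpha}|\to+\infty}\Big(\prod_{1\le i\le d}\|C_{\Lambda_i,\tau}(-\alpha_i)\|_v\Big)^{1/|\underline{\alpha}|}\le\pi_v^{-1}|\tau|_v \et \limsup_{|\underline{\alpha}|\to+\infty}\Big(\prod_{1\le i\le d}\|C_{\Lambda_i,\tau}(\alpha_i)\|_v\Big)^{1/|\underline{\alpha}|}\le\pi_v|\tau|_v^{-1}.$$

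For the second inequality I would apply the upper bound of Lemma \ref{lem1} to each factor with $\alpha_i\ge1$ (factors with $\alpha_i=0$ equal $\bI_\nu$, hence contribute $1$): writing $\gamma_{i,1},\dots,\gamma_{i,\nu}$ for the eigenvalues of $\Lambda_i$,
$$\prod_{1\le i\le d}\|C_{\Lambda_i,\tau}(\alpha_i)\|_v\le\Big(\prod_{1\le i\le d}c_{2,i}\,\alpha_i^{\nu-1}\Big)\,|\tau|_v^{-|\underline{\alpha}|}\max_{\underline{j}}\Big|\prod_{1\le i\le d}(\gamma_{i,j_i}+1)_{\alpha_i}\Big|_v,$$
the maximum over $\underline{j}=(j_1,\dots,j_d)$ with $1\le j_i\le\nu$. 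Taking $1/|\underline{\alpha}|$-th powers: the polynomial prefactor is at most $(\max_i c_{2,i})^d(1+|\underline{\alpha}|)^{d(\nu-1)}$, so its contribution tends to $1$; $|\tau|_v^{-|\underline{\alpha}|}$ contributes exactly $|\tau|_v^{-1}$; and for each of the finitely many $\underline{j}$, Lemma \ref{liouv} (via $(\gamma_{i,j_i}+1)_{\alpha_i}=\gamma_{i,j_i}^{-1}(\gamma_{i,j_i})_{\alpha_i+1}$, which differs from a product of shifted factorials by a bounded constant) gives $\limsup_{|\underline{\alpha}|\to+\infty}\big|\prod_i(\gamma_{i,j_i}+1)_{\alpha_i}\big|_v^{1/|\underline{\alpha}|}=\pi_v$. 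Hence the $\limsup$ is $\le\pi_v|\tau|_v^{-1}$.

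For the first inequality I would argue identically, using instead the upper bound of Lemma \ref{lem2} on each $\|C_{\Lambda_i,\tau}(-\alpha_i)\|_v$ (again the $\alpha_i=0$ factors are trivial). Multiplying over $i$ produces a polynomial prefactor (contribution $\to1$), a factor $|\tau|_v^{|\underline{\alpha}|+2d}$ (contribution $|\tau|_v$), a factor $\big(\max_{\underline{j}}\big|\prod_i(-\gamma_{i,j_i}+1)_{\alpha_i+1}\big|_v\big)^{\nu-1}$ and a factor $\big|\prod_{1\le i\le d,\,1\le j\le\nu}(-\gamma_{i,j}+1)_{\alpha_i+1}\big|_v^{-1}$. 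Since the numbers $-\gamma_{i,j}+1$ again lie in $\bQ\cap\bZ_{p(v)}\setminus\bZ$, Lemma \ref{liouv} shows the first of these contributes $\pi_v^{\nu-1}$, and, grouping the second as $\prod_i\prod_j(-\gamma_{i,j}+1)_{\alpha_i+1}$ and running the estimate of Lemma \ref{liouv} with $\nu$ shifted-factorial factors per variable, the second contributes $\pi_v^{-\nu}$. The net Pochhammer contribution is $\pi_v^{\nu-1}\cdot\pi_v^{-\nu}=\pi_v^{-1}$, giving the bound $\pi_v^{-1}|\tau|_v$, and both displayed inequalities of the proposition follow.

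I expect the only genuine subtlety to be the multi-index passage to the limit: Lemmas \ref{lem1} and \ref{lem2} are one-variable asymptotics, so one must check that the polynomial-in-$\alpha_i$ prefactors, once multiplied over $i$, stay subexponential in $|\underline{\alpha}|$ (they do, being bounded by a fixed power of $1+|\underline{\alpha}|$), and that the products of Pochhammer symbols behave correctly as $|\underline{\alpha}|\to+\infty$ even when some individual $\alpha_i$ remain bounded --- which is exactly the content of Lemma \ref{liouv}. The one point deserving care is the exponent count $\pi_v^{\nu-1}\cdot\pi_v^{-\nu}=\pi_v^{-1}$ in the negative-index case; note also that the hypothesis $\tau_1=\cdots=\tau_d$ is what makes $\prod_i|\tau|_v^{\mp\alpha_i}=|\tau|_v^{\mp|\underline{\alpha}|}$ and hence yields a clean limit.
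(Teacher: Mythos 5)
Your proposal is correct and follows essentially the same route as the paper's proof: the submultiplicative bound $\|Z^{\underline{\tau}}_{\underline{\Lambda},\pm\underline{\alpha}}\|_v\le\|Y_{\mp\underline{\alpha}}\|_v\prod_i\|C_{\Lambda_i,\tau}(\mp\alpha_i)\|_v$, the upper bounds of Lemmas \ref{lem1} and \ref{lem2} multiplied over $i$, and Lemma \ref{liouv} to evaluate the Pochhammer contributions (your explicit exponent count $\pi_v^{\nu-1}\cdot\pi_v^{-\nu}=\pi_v^{-1}$ in the negative-index case is exactly what the paper leaves implicit in its final "combining these observations" step).
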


\end{document}